\newcommand{\calG}{\mathcal{G}}
\newcommand{\calH}{\mathcal{H}}
\newcommand{\calM}{\mathcal{M}}
\newcommand{\calA}{\mathcal{A}}
\newcommand{\calC}{\mathcal{C}}
\newtheorem{proposition}{Proposition}
\title{Underground Freight Transportation for Package Delivery in Urban Environments}
\author{ Sarah Powell \orcidlink{0000-0002-4919-5545}, Ann Melissa Campbell \orcidlink{0000-0002-9927-687X}, Mojtaba Hosseini \orcidlink{0000-0003-0272-016X} \\
	Department of Business Analytics\\
	University of Iowa\\
	Iowa City, IA 52242 \\
	\texttt{sarah-powell@uiowa.edu, ann-campbell@uiowa.edu, mojtaba-hosseini@uiowa.edu} \\
}
\begin{document}
\maketitle

\begin{abstract}
The use of underground freight transportation (UFT) is gaining attention because of its ability to quickly move freight to locations in urban areas while reducing road traffic and the need for delivery drivers. Since packages are transported through the tunnels by electric motors, the use of tunnels is also environmentally friendly. Unlike other UFT projects, we examine the use of tunnels to transport individual orders, motivated by the last mile delivery of goods from e-commerce providers. The use of UFT for last mile delivery requires more complex network planning than for direct lines that have previously been considered for networks connecting large cities. We introduce a new network design problem based on this delivery model and transform the problem into a fixed charge multicommodity flow problem with additional constraints. We show that this problem, the $n_d$-UFT, is NP-hard, and provide an exact solution method for solving large-scale instances. Our solution approach exploits the combinatorial sub-structures of the problem in a cutting planes fashion, significantly reducing the time to find optimal solutions on most instances compared to a MIP. We provide computational results for real urban environments to build a set of insights into the structure of such networks and evaluate the benefits of such systems. We see that a budget of only 45 miles of tunnel can remove 42\% of packages off the roads in Chicago and 32\% in New York City. We estimate the fixed and operational costs for implementing UFT systems and break them down into a per package cost. Our estimates indicate over a 40\% savings from using a UFT over traditional delivery models. This indicates that UFT systems for last mile delivery are a promising area for future research.
\end{abstract}

\keywords{last mile \and tunnels \and emerging modes \and cutting planes}

\section{Introduction} \label{intro}

Underground freight transportation (UFT), sometimes called underground logistics systems, is an unmanned transportation system in which crates of packages travel underground through tubes between different locations. The use of UFT is gaining attention because of its ability to quickly move freight to locations in urban areas while reducing road traffic and the need for drivers, which have been increasingly in short supply. Since packages are transported through the tunnels by electric motors, the use of tunnels is environmentally friendly, and the operation of the tunnels is not hampered by weather conditions such as snow storms.  Most of the proposed applications for using tunnels for freight transportation focus on moving containerized cargo around ports such as with HyperloopTT \citep{HyperloopTT}, moving palletized goods between large urban areas such as with Cargo Sous Terrain in Switzerland \citep{cargo}, or moving palletized goods into a city center via the Smart City Loop in Hamburg \citep{Smart}. These proposals require very large capital investments and several years for construction. 
To the best of our knowledge, only the Cargo Sous Terrain is under construction with the goal of connecting key hubs in Switzerland by 2031.
Unlike these projects, our paper examines the use of tunnels to transport individual orders, motivated by the last mile delivery of goods from e-commerce providers such as Amazon or delivery service providers such as UPS or FedEx.  

The use of tunnels for last mile delivery has been considered by \cite{boysen2023optimization} and \cite{boysen2024jam} but from a scheduling and operational perspective for a given tunnel network. To the best of our knowledge, the current literature does not address the question of how to design a UFT network for individual package delivery or evaluate the potential benefits of such a system. 
The use of UFT for last mile delivery in urban environments requires more complex network planning than for direct lines between cities such as for the Cargo Sous Terrain.  There are strategic decisions about what parts of a city should be served by tunnels and which should remain served by trucks, as well as constraints on the design of the network.

This paper is inspired by meetings with the London-based company Magway. Magway \citep{magway} proposes the use of 90 cm wide tunnels built alongside an existing road or transit network. These tunnels are significantly smaller than those proposed for use by Cargo Sous Terrain and the Smart City Loop, resulting in a lower capital expense for construction while increasing the feasibility of using existing space along road or transit networks or re-using existing infrastructure, such as tunnels built for mail distribution in large cities. Magway has developed the technology for a UFT system utilizing electric motors to propel carriages containing individual orders along tracks in tunnels. 
The linear motors designed by Magway require very little service and maintenance. Magway proposes installing two parallel tunnels for inward and outward moving goods. Part of the network design is that the two tunnels interconnect every kilometer, so goods can be redirected between tunnels in the case that tunnel maintenance is needed.  In this situation, goods would move in only one direction at a time while a section of tunnel is repaired or replaced.
With a capacity of 72,000 UFT carriages per hour, the number of trucks used for package delivery could be greatly reduced by installing such a system. A UFT system, such as the one proposed by Magway, can be built and used by a single delivery carrier, such as UPS, or built by a city or government entity that would charge multiple service providers for the use of time and capacity.  We consider the latter here.

In this paper, we consider a city that has decided to build a UFT system for package delivery. The system will deliver packages from a depot(s) to microhubs that each serve a small area or neighborhood. Customers can walk or bike to retrieve their packages from such a center, or packages can be delivered ``the last meter" by a service provider using a cargo bike or small delivery vehicle. The use of microhubs is becoming increasingly popular to help reduce the number of vehicles in urban areas.  
Interestingly, there is little agreement on the correct form of a microhub.  Some cities lease parking spots to serve as microhubs (e.g. Washington, DC; London) \citep{NYC}, while others use shipping containers on the side of the road (e.g. Berlin) \citep{NYC}, on- or off-street locations with truck parking and security (e.g. New York City) \citep{micro}, pods in parks (e.g. New York City) \citep{micro}, or reserved areas in public garages (e.g. New York City, London) \citep{micro, NYC}. Some cities are utilizing unused commercial space as microhubs (e.g. Amsterdam, Paris) \citep{south, NYC}. Most microhubs are in the range of size of 500-1000 square feet, as they only serve a small area, but some can be 1000 to 10,000 square feet \citep{Locus}. Although microhubs can take many forms, they all have the focus of allowing goods to be transferred to cleaner modes of final transport to the end customer \citep{lee}.

In our model, the UFT system will be combined with traditional trucks and vans to cover a large service area, such as a large city. For example, the UFT network may cover high demand areas, but delivery trucks will be used to serve the rest of the network.  We present a model to decide where to build UFT links such that the amount of demand served by UFT is maximized while a tunnel budget and maximum loading capacity are respected. Cities considering the installation of the UFT systems, to our understanding, are most interested in reducing the number of trucks on the road network (Magway Interview \citep{magway}), so our objective of maximizing the demand served allows cities to gain the most benefit from the UFT network. We focus on this problem at a strategic level to understand the potential benefits of such a delivery model, such as the number of deliveries that could be served, trucks taken off the road, and savings in emissions. We discuss the advantages of using UFT as compared with a traditional delivery system using electric vehicles. We also provide an estimate of the building and operating costs to understand the financial implications of such as system.  

We note that our model assumes some collaboration among package delivery service providers. In most of the existing models for collaboration for freight delivery, deliveries from different providers are combined on vehicles.  The required knowledge sharing regarding customers and cost allocation are challenging parts of the implementation {\citep{gansterer2020shared}}. As in the approach for tunnel use presented in \cite{boysen2024jam}, we assume service providers will book and pay for time and capacity, allowing collaboration without the same challenges. We assume that the ``last meter" delivery from the microhubs could either be divided amongst the original service providers or done by a third party.  Similarly, deliveries to parts of the city not served by the UFT network may be handled by the original service providers or in a collaborative manner.  In some of our experiments, we compare the results of using a UFT system with a fully collaborative truck-based system, as this represents a lower bound on delivery costs.

Our major contributions include introducing a new problem based on a promising new delivery model and transforming the problem into a fixed charge multicommodity flow problem with additional constraints. A special case of this problem with a single depot and unit distances between microhubs is the $k$-cardinality tree problem, which is known to be NP-hard \citep{fischetti1994weighted}. By decomposing the model into a base model with cuts added as needed, we offer an improved methodology capable of quickly solving many realistically sized problem instances to optimality.  We provide computational results for real urban environments to build insight into the structure of such networks and evaluate the benefit of such systems. Specifically, we use population and road network data for the cities of Chicago and New York City in our computational experiments. 
We will show that a tunnel of only 45 miles in length can take 42\% of packages off the roads in Chicago and 32\% in New York City. At the same tunnel budget of 45 miles, serving microhubs with a UFT system can save 5,267 truck miles per day in Chicago and 5,660 truck miles per day in New York City compared to a collaborative delivery setting. We also estimate the fixed and operational costs for implementing UFT systems and break them down into a per package cost. Our estimates indicate over a 40$\%$ savings from using a UFT over traditional delivery models.  This indicates that UFT systems for last mile delivery are a promising area for further research.

\section{Literature review}
We review related literature on underground freight transportation, the use of tunnels for high-speed passenger transportation, collaborative transportation, and multicommodity network flow problems. And, since the depot location and allocation of microhubs to depots is a decision in our model, we note that \cite{alumur2021perspectives} provide a taxonomy of hub location problems.

\subsection{Underground freight transportation}
Underground tunnels or tubes have been used as a  delivery method for years. As described in \cite{visser2018development}, tunnels using air propulsion have historically been used to deliver telegrams and mail between or within buildings. And, underground connections in Central London have been used to move mail between post offices \citep{visser2018development}. 

Many of the more recent studies of UFT systems focus on using them at ports and/or with pallets of goods, rather than individual packages. For example, the impact of using a UFT network to move palletized goods into and out of a port in Shanghai is analyzed in \cite{chen2018using}. The authors show that  such a network would provide benefits in terms of transportation costs, time, and truck emissions. The financial benefits and feasibility of building a UFT system to move containers of goods from the Port of Houston to the city of Dallas is provided in \cite{ehsan2017investment}.  In \cite{liu2004feasibility}, the authors test the feasibility of using pneumatic capsule pipelines for transporting pallets of goods in New York City. They found that a tube system could deliver most of the cargo currently carried by trucks, including those carried by tractor trailers. But, for this delivery system to be effective, there must be an extensive network of large underground tunnels built throughout the city with numerous inlet/outlet stations. They summarize that implementing such a system would be costly, but the benefits from reduced truck traffic in the city would be huge. We note that the construction costs of the larger tunnels proposed by the authors are much more expensive than the smaller ones discussed in this paper. 

In other evaluations of UFT, the authors focus on simulating the performance of a system rather than its design.  For example, \cite{dong2019impact} use a systems dynamic method to analyze the quantitative relationship between the implementation strategy of UFT and the sustainability of urban transportation and logistics. They use a simulation study with data from Beijing to model the benefits to urban traffic and logistics by using a UFT system to supplement an existing urban transport system. They consider various UFT network densities which allows them to determine the volume of packages that can be delivered via UFT and the impact this will have on congestion, delivery time, and emissions. 

In an urban setting similar to ours, \cite{boysen2023optimization} and \cite{boysen2024jam} consider the UFT problem from an operational perspective, addressing the coordination of packages from a depot outside a city to inner city microhubs using UFT tunnels. They do not consider the network design. In \cite{boysen2023optimization} they consider limitations on the storage capacity of microhubs and the transport capacity of a delivery person at a microhub when determining the optimal way to assign shipments to arrival slots. The authors show that emission reductions come at the price of excessive bike traffic in urban areas when a microhub serves a large area and e-bikes are used for delivery to customers. In \cite{boysen2024jam} they evaluate the tunnel scheduling problem to determine when the capacity of tunnels is significantly under utilized. They found that when many logistics providers, each with a small transport volume, book tunnel capacity by time slot, there is a significant amount of unused tunnel capacity. It is also the case that urgent same-day deliveries result in significant under utilization of cargo tunnels, and that small and time critical transport volumes booked by time slot can result in increased road traffic when trucks must make deliveries than cannot be completed on time by tunnel. We consider a similar setting where packages from multiple service providers will enter the UFT network at a depot located outside the service area and will be sent via tunnels to microhubs located within the city. But, we assume that the tightest delivery deadlines are next day and would not result in delivery deadline or microhub capacity infeasibilities.

\subsection{Tunnels for high-speed passenger transportation}
While using tunnels for freight transportation has been considered for years, the idea of high speed passenger transportation using sealed tubes, such as \cite{Hyperloop}, has created new interest. The goal of Hyperloop One is to provide highspeed travel between major cities, such as a 30 minute connection between Helsinki and Stockholm instead a 3.5 hour flight or overnight ferry trip. While such high speed transportation can be a significant time saver for commuters and a generator of economic benefits for cities and regions along the route, these networks are often proposed as single connections between high demand urban areas, unlike the multi-station network required for a UFT system serving many microhubs. 

There are a few papers, such as \cite{merchant2020towards} and \cite{shah2019hyperloop}, that consider hyperloop networks with multiple stops, but they do not provide exact algorithms to determine the optimal network with respect to maximizing demand served. \cite{merchant2020towards} select cities for hyperloop stations based on given criteria. In \cite{shah2019hyperloop}, a heuristic for building a network is proposed based on the benefit to cost ratio of links. The benefit of a link is determined by the travel time savings. While this provides insight for designing a passenger transport system, it is less appropriate for designing a freight transportation system that seeks to maximize the number of packages delivered via UFT.

\subsection{Collaboration for package delivery} \label{collaboration}
An underground freight transportation system built and managed by a city will be a shared resource, similar to a road network, and will require some level of collaboration by the different service providers using it. We focus our review of collaborative delivery literature on those that consider a centralized collaboration system with shared depots. We note that \cite{gansterer2020shared} and \cite{gansterer2018collaborative} provide detailed reviews of the work in collaborative logistics.

It is common in collaborative logistics literature to assume that different service providers have access to a common set of depots where packages can be picked up or dropped off. For example, \cite{zhang2020composite} consider depots with orders for multiple carriers and allow carriers to make deliveries to shared depots. They use this assumption to model a multi-objective collaborative vehicle routing problem and evaluate the savings from collaboration. \cite{ko2020collaboration} propose operating one service center in each service area that various companies will share. They derive a last-mile delivery time function to maximize the profit of each participating company. \cite{handoko2014auction} focus on maximizing the profit at an urban consolidation center (UCC) that is located outside of a city center. Various service providers use the system and pay to use the center. They assume the center owns a fleet of vehicles for deliveries and propose a profit-maximizing auction mechanism for the use of the UCC. This is similar to the system used at La Petite Reine in Paris,  Westfield Consolidation Centre in London, and Binnenstadservice.nl in Nijmegen, the Netherlands. Similarly, we assume that service providers will pay to use the UFT system to deliver packages to microhubs located throughout the city. We also assume that delivery service providers using UFT to deliver packages to microhubs will first deliver packages to a depot located outside of the service area. 

\subsection{Multicommodity flow problems}
The formulation we use for UFT network design is based on adapting a formulation for the integer multicommodity flow problem, which is known to be NP-hard \citep{even1975complexity}.  The multicommodity flow problem is  well-studied in the operations research literature and is commonly used to model network design for transportation and delivery problems. It was first introduced in \cite{ford1958suggested} and \cite{hu1963multi}. We note that \cite{salimifard2022multicommodity}, \cite{ouorou2000survey}, and \cite{assad1978multicommodity} provide surveys of applications and solution techniques for multicommodity flow problems.

Multicommodity flow problems have many applications across transportation, logistics, and communication problems. In \cite{yaghini2012multicommodity}, the multicommodity network design problem is applied to rail freight transportation planning. They show that branch-and-bound and decomposition methods are efficient ways to find exact solutions. In another work on modeling freight transport, \cite{rudi2016freight} use a capacitated multicommodity flow formulation to provide decision support in intermodal freight transportation planning. Multicommmodity flow formulations for freight delivery can be extended to the time-constrained case as shown in \cite{hellsten2021transit}. Additional works applying multicommodity flow to service network design for freight transportation are reviewed in \cite{crainic2000service}. They focus on consolidated transportation for long-haul freight transport and provide common solution methods including Lagrangian relaxation, branch-and-bound methods, applying valid inequalities, and cutting plane methods as well as heuristics. \cite{wang2018multicommodity} provides a survey of additional applications and formulations of the multicommodity flow problem. Most of the formulations mentioned here focus on cost minimization when a certain level of demand for each commodity must be satisfied. Instead, we focus on maximizing demand served with a budget constraint on the size of the network. 

Even for very simple multicommodity network flow problems with continuous flow and linear costs, it is practically very hard to find optimal solutions \citep{salimifard2022multicommodity}. And, for problems like network design for UFT with integer flow and fixed costs, finding optimal solutions is even more challenging. \cite{ouorou2000survey} provide a survey of algorithms for multicommodity flow problems and state that the structure of such problems makes decomposition methods attractive.
Along this line, we propose an exact solution method for solving realistically sized instances of UFT to optimality. Inspired by the optimality conditions that we identify for UFT, we leverage the combinatorial sub-structures to simplify the formulation and produce the necessary constraints in a lazy constraint fashion akin to subtour elimination techniques for the travelling salesman problem \citep{cook2011traveling}.

\section{Problem description and model} 

 In this section, we describe the problem in Section \ref{Description}, the mixed integer programming model in Section \ref{Model1}, and properties of the problem in Section \ref{props}.

\subsection{Problem description}\label{Description}

In the $n_d$-UFT, we determine the the design of the network including the links and set of microhubs that should be built to maximize the amount of demand served by UFT. The UFT network has $n_d$ depots, a maximum daily depot loading capacity of $P_{max}$, and a distance budget of $D$. When a UFT link is built, tunnels are installed in both directions at the same time. Thus, once a carriage arrives at a microhub, it can return to the depot along the same set of links by which it arrived. So, unlike a typical truck route, UFT carriages do not need to follow a circuit to return to the depot.

We assume that there is a set of potential microhubs $\mathcal{M}$, with each $i\in \mathcal{M}$ covering a pre-specified area within the city. The demand at a microhub $i \in \mathcal{M}$, $P_i$, represents the average number of packages demanded per day for the particular area served by that microhub. We assume that the demand associated with microhubs that are not served by the UFT will be served by delivery trucks and those microhubs will not be built.

The network design decision is based on selecting a set of arcs, denoted $\mathcal{A}$, between potential microhubs. These arcs represent candidates for where the UFT links may be built. 
The distance budget limits the total length of (two-way) tunnels that can be built along the road network. We denote the distance budget for the UFT network as $D$ and the length of a given arc $(i,j)$ as $d_{ij}$.  The carriages, which contain the packages for a particular destination, can be programmed to follow a specific path to a destination, allowing for multiple arcs adjacent to the same microhub to be selected in potential solutions. This is an important differentiation from prior work since most proposed networks with palletized deliveries consist of only a single line.  

The tunnel network originates at one or more depots. The depots are chosen from a set of potential depot locations $\mathcal{H}$ with the number of selected depots specified as $n_d$. We assume that a city would likely build a low number of very large depots to derive economies of scale from building such facilities, so we will experiment with different low values of $n_d$ in Section \ref{results}.  At the depots, the carriages enter the network from loading/unloading equipment located in a set of bays, as illustrated in Figure \ref{fig:TRL} from a report by \cite{TRL} in collaboration with \cite{magway}.  Loading bays can be dedicated to different providers, for example.  

Due to the high speed of tracks in a UFT system, large numbers of packages can move quickly from depot(s) to destination microhubs.  The capacity of the UFT system is determined by the headway, the time between consecutive UFT carriages passing a set point. This value, $\tau$, is derived based on the speed and length of UFT carriages. To achieve delivery to customers within a service level, we model this via a limitation on the total time, $T$, for packages to enter the network.  For a given $T$, we can derive the capacity on the number of packages, $P_{max}$, by $T/\tau$.  Thus, if we set $P_{max}$ as the loading capacity for a depot, all the packages assigned to a depot can enter the network in time $T$ or less. For example, a city may want the capability of all packages entering the network within 12 hours ($T$) to enable next day delivery, and thus $P_{max}$ would be the number of packages that can be loaded in 12 hours.  We note that the model can easily be adapted to run multiple times per day by modifying $T$ and $P_i$ values appropriately (see Proposition \ref{demand} for more details). Given that the capacity is defined by the size of carriages and how quickly they can enter the tunnel network from a depot, we assume that each depot serves a separate network, and we will assume each depot will only have one arc entering and exiting from it.

We make a few assumptions that keep the results easier to interpret. For example, we focus on where to build arcs between microhubs rather than the cost of the microhubs.
This approach makes it easier to interpret the changes that result from increasing or decreasing the budget.   

\begin{figure}[!ht]
    \centering
    \includegraphics[width = 6cm]{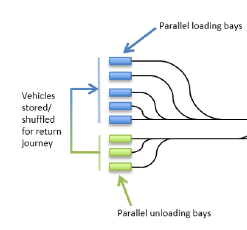}
    \caption{A Schematic Diagram for Depot Loading Design}
    \label{fig:TRL}
\end{figure}

\subsection{Model formulation} \label{Model1}
We formulate the $n_d$-UFT using a multicommodity flow formulation similar to that presented in \cite{kandyba2011exact} but add several modifications to account for the differences in our problem. \cite{kandyba2011exact} transform the undirected arcs of their multicommodity network flow problem into directed arcs by replacing each undirected arc with two directed arcs. This better captures the flow out of a specified root node in their problem. Similarly, we transform the undirected arcs of the $n_d$-UFT into directed arcs to specify the path carriages would travel between a depot and microhub, but note that carriages may still travel both directions along any connection in the network. We denote the set of directed arcs as $\mathcal{A'}$.

The mixed integer programming (MIP) formulation is given in \eqref{obj}--\eqref{mcf8}, and the set of parameters is given in Table \ref{Tab:Parameters}. The binary decision variables $y_i$ for $i \in \mathcal{M} \cup \mathcal{H}$ indicate whether node $i$ is included in the UFT network, and the binary decision variables $x_{ij}$ for $(i,j) \in \mathcal{A'}$ indicate if a UFT tunnel is built along the arc $(i,j)$. The flow variables $f^{v}_{ij}$ denote the flow of commodity $v$ along the arc $(i,j)$. Each node $v \in \mathcal{M}$ is assigned its own commodity. Definitions of these variables are given in Table \ref{Tab:Variables1}.

\begin{table}[h]
\begin{tabular}{ p{20mm} p{125mm} } 
\hline
  Parameter & Description \\
\hline
  $\calM$ & Set of microhubs \\
  $\calH$ & Set of candidate depot locations\\
  $\mathcal{A'}$ & Set of directed arcs between nodes in {$\calM \cup \calH$} \\
  $n_d$ & Number of depots \\
  $P_i$ & Number of packages to be delivered to microhub $i \in \calM$ per day \\
  $P_{max}$ & Number of packages that can be loaded in $T$ hours \\
  $T$ & Total permitted loading time per day \\
  $D$ & Maximum length of the UFT network \\
  $d_{ij}$ & Distance between nodes $i$ and {$j \in \calM \cup \calH $}\\
  \hline 
\end{tabular}
  \caption{Definition of parameters.}
  \label{Tab:Parameters}
  \end{table}

\begin{table}[h]
\begin{tabular}{ p{20mm} p{125mm}  } 
\hline
   Variable & Description \\
\hline
   $y_{i}$ & $y_{i} = 1$ if the node $i \in \calM \cup \calH$ is included in the UFT network \\
   $x_{ij}$ & $x_{ij} = 1$ if the directed arc $(i,j) \in \mathcal{A'}$ is chosen for building a UFT link \\
   $f^v_{ij}$ & The flow of commodity $v \in \calM$ along the directed arc $(i,j) \in \mathcal{A'}$\\
\hline	
\end{tabular}
  \caption{Definition of variables.}
  \label{Tab:Variables1}
  \end{table}

\begin{align}
    \max \quad & \sum_{i \in \calM} P_{i} y_{i} \label{obj} \\
    \text{s.t.} \quad & \sum_{j:(j,i) \in \mathcal{A'}} f_{ji}^{v} - \sum_{j:(i,j) \in \mathcal{A'}} f_{ij}^{v} = y_v && \forall i \in \calM, \text{ } \forall v \in \calM \text{ if } i = v  \label{mcf4}\\
    & \sum_{j:(j,i) \in \mathcal{A'}} f_{ji}^{v} - \sum_{j:(i,j) \in \mathcal{A'}} f_{ij}^{v} = 0 && \forall i \in \calM, \text{ } \forall v \in \calM \text{ if } i \neq v  \label{mcf5}\\
    & \sum_{(i,j) \in \mathcal{A'}} d_{ij} x_{ij} \leq D \label{c1}\\
    & \sum_{j: (h,j) \in \mathcal{A'}} x_{hj} = y_h && \forall h \in \calH \label{c2}\\
    & \sum_{h \in \calH} y_h = n_d \label{mcf10}\\
    & \sum_{i:(i,j) \in \mathcal{A'}} x_{ij} = y_j && \forall j \in \calM \label{c4}\\
    & \sum_{v \in \calM} \sum_{j:(h,j) \in \mathcal{A'}} P_v f^v_{hj}  \leq P_{max} && \forall h \in \calH \label{c3}\\
    & 0 \leq f_{ij}^{v} \leq x_{ij} && \forall (i,j) \in \mathcal{A'}, \forall v \in \calM \label{mcf6}\\
    & y_i \in \lbrace 0,1 \rbrace && \forall i \in \calM \cup \calH \label{mcf7}\\
    & x_{ij} \in \lbrace 0,1 \rbrace && \forall (i,j) \in \mathcal{A'} \label{mcf8}
\end{align}

The objective of the $n_d$-UFT problem given in \eqref{obj} is to maximize the number of packages delivered by the UFT network.  Constraints \eqref{mcf4} and \eqref{mcf5} conserve the flow of commodities through the network. Constraint \eqref{c1} ensures that the total length of tunnels built does not exceed $D$. Due to the infrastructure associated with building a depot, we assume there will be exactly one arc from each depot to the set of microhubs. This is reflected in Constraint \eqref{c2}. Constraint \eqref{mcf10} in combination with constraint \eqref{c2} determines the number of depots that are included in the solution. Constraint \eqref{c4} ensures that each depot serves a separate network by allowing at most one directed arc to be incident to each microhub. Constraint \eqref{c3} sets the depot loading capacity of each depot to be the number of carriages that can be loaded within the time limit $T$. Constraint \eqref{mcf6} requires the flow along an arc to be non-negative. Constraints \eqref{mcf7} and \eqref{mcf8} define the binary variables.

\subsection{Properties of \texorpdfstring{$n_d$}-UFT}\label{props}
\begin{proposition}
The $n_d$-UFT problem is NP-hard.
\end{proposition}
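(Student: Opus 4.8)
The plan is to prove NP-hardness by a polynomial reduction from the weighted $k$-cardinality tree problem (KCT), which asks for a subtree containing a prescribed number $k$ of edges that optimizes the total weight and is NP-hard by \cite{fischetti1994weighted}. The excerpt has already flagged that the single-depot, unit-distance case of the $n_d$-UFT coincides with KCT, so the work is to make that correspondence precise and to verify that the restriction is a genuine, polynomially constructible special case. I would fix $n_d = 1$, set every arc length $d_{ij}=1$, and choose $P_{max}$ larger than $\sum_{i \in \mathcal{M}} P_i$ so that the loading-capacity constraint \eqref{c3} is never binding; the demands $P_i$ then play the role of the vertex weights and the budget $D$ plays the role of the cardinality bound. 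Given a KCT instance on a graph $G$, I would take $\mathcal{M}$ to be its vertex set, create a single candidate depot in $\mathcal{H}$ joined by unit-length arcs to every vertex, and replace each edge of $G$ by its two directed arcs in $\mathcal{A'}$.

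The crux is a structural lemma characterizing the feasible solutions of this restricted model. I would show that in any feasible integral solution the arcs that actually carry flow form an \emph{arborescence rooted at the depot}, while any built-but-unused arcs merely waste budget and can be pruned without changing the objective. Indeed, constraint \eqref{c2} forces exactly one arc to leave the depot, constraint \eqref{c4} forces every built microhub to have exactly one entering arc, and because the conservation constraints \eqref{mcf4}--\eqref{mcf5} are imposed only on microhubs, the depot acts as the unique source of every commodity; hence each microhub $v$ with $y_v = 1$ must receive a unit of commodity $v$ routed from the depot over arcs with $f^{v}_{ij} \le x_{ij}$, forcing reachability from the depot along built arcs. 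At most one parent per node together with reachability from a single root rules out cycles and disconnection, so the flow-carrying network is precisely a tree hanging off the depot. With unit lengths, constraint \eqref{c1} then caps the number of tree arcs, equivalently the number of served microhubs, by $D$; since every demand is nonnegative an optimal solution saturates the budget, making ``at most $D$'' and ``exactly $D$'' interchangeable. Under this correspondence the objective \eqref{obj} is exactly the total vertex weight of the chosen subtree, so optimal $n_d$-UFT solutions and optimal $k$-cardinality trees map to one another.

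From the lemma the reduction is immediate: a polynomial-time algorithm for $n_d$-UFT would solve KCT in polynomial time, so $n_d$-UFT is NP-hard. The step I expect to demand the most care is the structural lemma, specifically the reachability and acyclicity argument that upgrades the local degree conditions \eqref{c2} and \eqref{c4} into the global claim that the flow-carrying support is a single arborescence; one must verify that flow cannot circulate on a built cycle that is disconnected from the depot, that a microhub with $y_j=0$ can carry no flow (by \eqref{c4} it has no entering arc), and that redundant built arcs may be removed at no cost. A secondary bookkeeping point is reconciling the conventions of the two problems — rooted versus unrooted trees (handled by the universal depot gadget, which contributes exactly one arc and so shifts the cardinality by one, i.e.\ $D=k+1$), the direction of optimization, and the exactly-$k$ versus at-most-$D$ distinction (handled by nonnegativity of demands) — so that the KCT instance invoked is genuinely the NP-hard one of \cite{fischetti1994weighted}.
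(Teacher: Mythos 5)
Your construction (single depot, unit lengths, slack capacity $P_{max}$, universal-depot gadget with budget $D=k+1$) and your structural lemma are sound; indeed the arborescence lemma is more careful than what the paper writes down. The genuine gap is precisely the point you defer as ``bookkeeping'': the direction of optimization. The $k$-cardinality tree problem of \cite{fischetti1994weighted}, which both you and the paper invoke for NP-hardness, is a \emph{minimization} problem --- find a tree with exactly $k$ edges of minimum total weight. Under your direct identification $P_i = w_i$, the $n_d$-UFT maximizes $\sum_i P_i y_i$, so solving it returns a \emph{maximum}-weight $k$-edge tree; your claim that ``optimal $n_d$-UFT solutions and optimal $k$-cardinality trees map to one another'' is false as stated. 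Moreover, the two obvious repairs interact badly with the rest of your argument: negating the weights (which is what the paper does, creating negative $P_i$ values) destroys exactly the nonnegativity you use to argue that an optimal solution saturates the budget --- with negative demands the optimizer builds the single arc forced by \eqref{c2} and \eqref{mcf10} and stops, so ``at most $D$'' no longer coincides with ``exactly $D$.'' You cannot import the paper's sign flip into your framework without breaking your exactness step.

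The correct repair inside your framework is complementation rather than negation: set $P_i = M - w_i$ with $M$ sufficiently large (e.g.\ $M > \sum_i w_i$), so all demands stay strictly positive. Your saturation argument then applies, every optimal solution uses exactly $D = k+1$ arcs and serves exactly $k+1$ microhubs, and the objective \eqref{obj} equals $(k+1)M - \sum_{i \in T} w_i$, a constant minus the KCT objective; maximizing it is equivalent to minimizing $\sum_{i \in T} w_i$ over trees with exactly $k$ edges. With that one change your proof is complete, and it differs from the paper's route in two further respects: the paper reduces from the \emph{rooted} KCT, identifying the prescribed root with the unique element of $\mathcal{H}$ (no gadget, no cardinality shift), whereas you simulate the unrooted version with a universal depot; and the paper asserts, rather than proves, that the budget constraint \eqref{c1} with unit lengths is ``equivalent'' to the exactly-$k$ requirement --- an equivalence that, for its negated weights, fails for the very reason identified above. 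Your nonnegativity-based exactness observation is the right tool to close that hole; it simply must be paired with shifted weights rather than unshifted or negated ones.
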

\begin{proof} 
We show that this problem is NP-hard by transforming the rooted $k$-cardinality tree problem (KCT), which is known to be NP-hard \citep{fischetti1994weighted}, to our problem. The rooted KCT problem consists of finding a minimum node-weighted rooted tree with exactly $k$ arcs. When all arcs in a UFT network are of unit length, then the constraint requiring exactly $k$ arcs to be included in the network is equivalent to the following constraint on the total length of arcs included: 
\begin{align*}
    \sum_{(i,j) \in \mathcal{A'}} d_{ij}x_{ij} \leq k.
\end{align*} 
\noindent
 While the KCT problem as presented in \cite{fischetti1994weighted} seeks to minimize the sum of the weights, the $n_d$-UFT maximizes the demand served through the UFT network based on the demand at microhubs or nodes in the network. So, the node weights from the KCT problem will have weights flipped to create negative $P_i$ values.
 If we include a non-binding loading time constraint (e.g. greater than the sum of weights) and set the specified root of the rooted $k$-Cardinality tree problem to be the only item in the set $\calH$, then the solution for the $n_d$-UFT problem with $n_d = 1$ will yield the solution for the rooted $k$-cardinality tree problem.
\end{proof}

\begin{proposition}
    The total demand served by the UFT system will be equivalent for a total loading time limit $T$ regardless of the number of equal-length loading windows when the demand for each microhub is divided evenly across loading windows. \label{demand} 
\end{proposition}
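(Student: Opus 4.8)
The plan is to treat this as a scaling (homogeneity) argument comparing two instances that share the same physical tunnel network but differ only in how a day's loading time is partitioned. First I would fix the single-window instance with loading time $T$, per-microhub demand $P_i$, and capacity $P_{max} = T/\tau$, and denote its optimal objective value by $Z^\star$. Against it I would place the $k$-window instance: each of the $k$ equal windows has loading time $T/k$, so its capacity becomes $P_{max}^{(k)} = (T/k)/\tau = P_{max}/k$ because the headway $\tau$ is a fixed hardware constant independent of $T$; and, by hypothesis, each window carries demand $P_i/k$ at every microhub.

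The crux is to pinpoint exactly where the demand parameters enter the formulation \eqref{obj}--\eqref{mcf8}. I would observe that the network-design and routing constraints \eqref{mcf4}--\eqref{c2}, \eqref{mcf10}, \eqref{c4}, and \eqref{mcf6}--\eqref{mcf8} never reference $P_i$ or $P_{max}$: in particular the flow-conservation right-hand side in \eqref{mcf4} is the indicator $y_v$, not the demand, so the flow variables merely route unit commodities and their feasible set depends only on the selected arcs. The only two places the magnitudes $P_i$ and $P_{max}$ appear are the objective \eqref{obj} and the loading-capacity constraint \eqref{c3}. Establishing this ``scale appears in only two spots'' fact is the main thing to verify carefully, and I expect it to be the sole real obstacle.

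Next I would show the invariance. Substituting $P_v \mapsto P_v/k$ and $P_{max} \mapsto P_{max}/k$ into \eqref{c3} multiplies both sides by $1/k$, leaving the constraint $\sum_{v}\sum_{j} P_v f^v_{hj} \le P_{max}$ literally unchanged; hence the feasible region of the $k$-window instance coincides with that of the single-window instance. Meanwhile the objective \eqref{obj} becomes $\tfrac{1}{k}\sum_{i} P_i y_i$, a strictly positive rescaling of the original. Since the feasible set is identical and the objective is scaled by a positive constant, the two problems share the same set of optimal designs $(y^\star,x^\star)$, and the per-window optimal value is exactly $Z^\star/k$.

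Finally I would aggregate over the day: the same network $(y^\star,x^\star)$ is built once and reused in all $k$ windows, each serving demand $Z^\star/k$, so the total served across the $T$-hour day is $k \cdot (Z^\star/k) = Z^\star$, matching the single-window value and proving the claim. The only bookkeeping to watch is confirming that no constraint other than \eqref{c3} depends on the absolute scale of demand and that $P_{max}$ is genuinely linear in $T$ through the fixed headway $\tau$; once those are in hand, the homogeneity collapses the argument immediately.
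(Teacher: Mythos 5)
Your proposal is correct and follows essentially the same route as the paper's proof: both arguments rest on the observation that scaling every $P_i$ and $P_{max}$ by $1/k$ leaves the capacity constraint \eqref{c3} unchanged and merely rescales the objective \eqref{obj}, so each window admits the same optimal network and the totals across windows sum back to the single-window value. Your explicit check that the demand parameters enter the formulation only through \eqref{obj} and \eqref{c3} (since the flow-conservation right-hand sides are the indicators $y_v$) is a point the paper leaves implicit, but it does not change the substance of the argument.
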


\begin{proof} For a loading time limit $T$ for a single loading window, the constraint on the number of packages that can be served from each depot is given by:
\begin{align*}
    &\sum_{v \in \calM} \sum_{j:(h,j) \in \mathcal{A'}} P_{v}f^{v}_{hj} \leq P_{max} = T/\tau && \forall h \in \calH
\end{align*}

\noindent When multiple loading windows are used, and $T$ is divided into $w$ equally sized loading windows of length $t$, and the demand at each microhub is evenly divided among the $w$ loading windows, then we will have an equivalent MIP for each loading window $w$. The constraint on the number of packages that can be delivered from each depot in each loading window is given by:
\begin{align*}
    &\frac{1}{w}\sum_{v \in \calM} \sum_{j:(h,j) \in \mathcal{A'}} P_{v}f^{v}_{hj} \leq \frac{T}{w \tau} && \forall h \in \calH
\end{align*}
This is equivalent to the constraint on the maximum number of packages that can be delivered in the single loading window case, and the remaining constraints of the problem remain the same. The value of each microhub $i \in \calM$ for each loading window is $P_{i}/w$, giving an objective function of:
\begin{align*}
    \max \sum_{i \in \calM} \frac{P_i}{w}y_i.
\end{align*}
For all of the loading windows, since the demand at each microhub is divided evenly, we will obtain equivalent solutions. The total demand served by the system can be obtained by multiplying the objective value of a specific loading window by $w$ or by writing the objective to the MIP for a single loading window as:
\begin{align*}
    \max\; w\sum_{i \in \calM} \frac{P_i}{w}y_i = \max \sum_{i \in \calM} P_i y_i
\end{align*}
This makes this problem equivalent to the version with a single loading window of length $T$. Thus, for any number of equally sized loading windows that sum to $T$ with demand at each microhub evenly spread across these windows, the optimal UFT network will be the same as the case of a single loading window with length $T$.
\end{proof}

\begin{proposition}
An optimal solution of the $n_d$-UFT will be a forest.\label{prop:forest}
\end{proposition}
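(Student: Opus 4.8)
The plan is to combine the hard degree constraints of the formulation with a budget-exchange argument. First I would use constraints \eqref{c4} and \eqref{c2} to pin down the local structure: every selected microhub $j\in\calM$ (with $y_j=1$) has exactly one incoming arc, and every selected depot $h\in\calH$ has exactly one outgoing arc. Thus the map sending each selected microhub to the tail of its unique incoming arc is well defined, and the natural object to analyze is the ``predecessor'' function it induces on the selected nodes.

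Next I would rule out cycles lying entirely among microhubs. Following predecessors from any selected microhub $v$ produces a backward walk $v,\ \text{pred}(v),\ \text{pred}^2(v),\ \dots$; if this walk never reached a depot it would have to close into a cycle $C\subseteq\calM$. Summing the flow-conservation equations \eqref{mcf4}--\eqref{mcf5} for commodity $v$ over the nodes of such a cycle yields a contradiction: the cycle absorbs $y_v=1$ unit of commodity $v$ at node $v$ but, having no arc entering it from outside, has no source to supply that unit. Hence every selected microhub traces back to a depot along a unique directed path, and since the number of selected arcs with a microhub head equals $\sum_{j\in\calM}y_j$ by \eqref{c4}, each depot's component consists of exactly one arc per non-root node on $\bigl(\sum_j y_j\bigr)+1$ nodes and is therefore an arborescence.

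The remaining case is a cycle created through a depot or through both directed copies of a single undirected link, since the in-degree of depots and the second orientation of an edge are the only degrees of freedom left unconstrained by \eqref{c2} and \eqref{c4}. Here I would invoke optimality directly: any such arc carries no commodity flow, because all flow is absorbed at microhubs, so deleting it leaves \eqref{mcf4}--\eqref{mcf5}, the capacity constraint \eqref{c3}, and the objective \eqref{obj} unchanged while strictly decreasing the left-hand side of the budget constraint \eqref{c1}. An optimal solution therefore contains no such redundant arc, and together with the previous step this forces the selected links to be acyclic, i.e.\ a forest rooted at the $n_d$ depots fixed by \eqref{mcf10}.

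The main obstacle is the bookkeeping in this last step: I must argue carefully that every cycle in the undirected support either is infeasible (handled by the flow-conservation argument) or contains a flow-free, budget-consuming arc whose deletion preserves feasibility, checking in particular that no rerouting of flow is ever required since the deletable arcs are exactly those not used by any commodity. Reconciling the directed arc set $\mathcal{A'}$ with the undirected notion of a forest, so that a pair of opposite arcs between two nodes is not miscounted as a cycle, is the one place where the argument must be stated precisely.
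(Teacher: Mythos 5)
Your argument is correct in substance but takes a genuinely different route from the paper's. The paper's proof is a one-line exchange argument at the level of the physical network: if a solution contains a cycle, delete \emph{any} arc of it; because tunnels are built in both directions, the remaining links still reach the same set of microhubs at strictly lower tunnel cost, so cycles are never needed in an optimal design. (Implicitly this relies on reorienting the surviving arcs of the directed formulation after the deletion --- removing a depot's unique out-arc or a microhub's unique in-arc outright would violate \eqref{c2} or \eqref{c4}.) You instead work inside the directed MIP: you use \eqref{c2} and \eqref{c4} to set up the predecessor structure, show by summing \eqref{mcf4}--\eqref{mcf5} over a would-be cycle of microhubs that such cycles are \emph{infeasible} (not merely suboptimal), and then invoke a budget-exchange step only for cycles through a depot, where the deleted arc has a depot head and so cannot conflict with \eqref{c2} or \eqref{c4}; no reorientation is ever needed. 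Your approach buys rigor with respect to the formulation and a stronger structural fact --- every feasible solution is already acyclic on the microhub side, and the only removable redundancy is arcs entering depots --- while the paper's buys brevity by arguing directly with the undirected, two-way-tunnel interpretation.

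One soft spot in your last step: the claim that the remaining cycle arcs carry no flow ``because all flow is absorbed at microhubs'' is not literally true, since the formulation imposes no flow conservation at depots; a feasible $f$ may contain a pointless circulation through a depot, so an arc whose head is a depot \emph{can} carry flow. The fix is simple and should be stated explicitly: first replace $f$ by a circulation-free flow (e.g.\ via flow decomposition, keeping only the depot-to-microhub paths that deliver each $y_v$), after which every arc entering a depot carries zero flow and can be deleted exactly as you propose, preserving \eqref{mcf4}--\eqref{mcf5}, \eqref{c3}, and the objective while strictly freeing budget in \eqref{c1}. With that normalization added, your proof is complete.
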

\begin{proof}
To prove that the created $n_d$ solution will be a forest, we consider the contradiction.  If there is a cycle (directed or undirected) in a graph, select any arc that is part of that cycle.  Removing the arc will still allow the same set of nodes to be visited by the tunnel network and at a lower tunnel building cost.  It does not matter which arc in the cycle is selected because tunnels will be built in both directions along the arcs in the final solution.
\end{proof}

\section{Solution Methodology}

{We introduce a solution method for solving large-scale instances of the $n_d$-UFT. Our solution approach relies on exploiting the combinatorial sub-structures of the problem in a cutting planes fashion.} {We describe our model decomposition and reformulation in Section \ref{sec:decomp}, feasibility cuts and valid inequalities in Section \ref{sec:mp-feasibility}, and the implementation of our cutting planes method in Section \ref{sec:implementation}.}

\subsection{Model Decomposition} \label{sec:decomp}
There are two types of variables in the MIP formulation of the $n_d$-UFT from Section \ref{Model1}. The binary variables $x_{ij}$ and $y_i$ indicate which depots, microhubs, and UFT links should be included in the optimal solution. The continuous flow variables $f_{ij}^v$ 
ensure that the binary variables $x_{ij}$ and $y_i$ satisfy the connectivity (constraints \eqref{mcf4}--\eqref{mcf5}) and capacity requirements (constraint \eqref{c3}) of the $n_d$-UFT. We can decompose the MIP into a master problem containing only the binary variables $x_{ij}$ and $y_i$.   The master program (MP) is given by \eqref{c1}-\eqref{c4} and \eqref{eq:mp-obj}-\eqref{eq:mp-8}. 

\begin{align}
\max \quad & \sum_{i \in \calM} P_i y_i \label{eq:mp-obj}\\
    \text{s.t.} \quad & \eqref{c1} - \eqref{c4} \nonumber \\
    & x_{ij} \leq y_i && \forall (i,j) \in \mathcal{A'}: i \notin \calH \label{eq:mp-6}\\
    &\sum_{(i,j) \in \mathcal{A'}} x_{ij} = \sum_{j \in \calM \cup \calH} y_j - n_d \label{eq:mp-forest-nodes-arcs} \\
    & y_i \in \lbrace 0,1 \rbrace && \forall i \in \calM \cup \calH \label{eq:mp-7}\\
    & x_{ij} \in \lbrace 0,1 \rbrace && \forall (i,j) \in \calA\label{eq:mp-8}
\end{align}

As in the MIP, the objective of MP \eqref{eq:mp-obj} and constraints \eqref{c1} $-$ \eqref{c4} maximize the amount of demand served by the UFT network while ensuring that the budget on the total length of the UFT network is respected, there is at most one arc out of each depot, $n_d$ depots are included in the solution, and at most one directed arc is incident to each microhub. From Proposition \ref{prop:forest}, we know that the solution to the $n_d$-UFT will be a forest of $n_d$ trees, so we can add constraint \eqref{eq:mp-forest-nodes-arcs} which limits the number of arcs in the solution to be the number of nodes reached minus the number of trees. Constraints \eqref{eq:mp-7} and \eqref{eq:mp-8} define the domain of the binary variables $y_i$ and $x_{ij}$.

Without the flow variables, the MP has many fewer variables than the MIP, but may return solutions that violate the connectivity and/or capacity requirements of the MIP.  We can iteratively add new feasibility constraints that ensure that $x_{ij}$ and $y_i$ meet the connectivity and capacity requirements. These new constraints using binary variables are presented in Section \ref{sec:mp-feasibility}.  
\subsection{A Combinatorial Characterization of the Feasibility Cuts and Valid Inequalities }\label{sec:mp-feasibility}
We first describe the capacity and connectivity constraints followed by additional valid inequalities (VI).

\subsubsection{Feasibility cuts}\label{feas}

\paragraph{Capacity Constraints:} {To enforce the capacity constraints \eqref{c3}, we must ensure that any connected component of the graph {induced by a solution to the MP which exceeds the depot capacity is avoided.}
For the $n_d=1$ case, it suffices to enforce the constraint $\sum_{i\in \calM} P_iy_i\le P_{max}$. 
However, for $n_d\ge 2$, for any subset of microhubs $\hat{\calM}\subseteq \calM$, we can allow these nodes to be part of a connected component
only if $\sum_{i\in \hat{\calM}} P_iy_i \le P_{max}$. {We can therefore model the capacity constraints using constraints of type \eqref{eq:cut-capacity} on each set $\hat{\calM}$ where the sum of the demand at the microhubs in $\hat{\calM}$ exceeds the depot capacity.}

\begin{align}
    \sum_{(i,j) \in \calA'\cap \hat{\calM}\times \hat{\calM}} x_{ij} \leq |\hat{\calM}|-2 \qquad \forall \hat{\calM}\subseteq \calM: \sum_{i\in \hat{\calM}} P_i > P_{max}\label{eq:cut-capacity}
\end{align}}

\paragraph{Connectivity Constraints:} 
Proposition~\ref{prop:forest} dictates that the solution will have exactly $n_d$ connected components, each connected to one open depot. To enforce connectivity, any subset of microhubs $\hat{\calM}\subseteq \calM$ that includes an open microhub must have an arc entering $\hat{\calM}$ from the set of the remaining nodes in the graph. Therefore, we can enforce connectivity using the set of constraints defined in \eqref{eq:cut-connectivity}.
\begin{align}
    \sum_{i\in \calH \cup \calM\setminus \hat{\calM}} \sum_{j\in \hat{\calM}: (i,j)\in \calA} x_{ij} \ge y_k \qquad \forall k\in \hat{\calM}, \forall \hat{\calM}\subseteq \calM \label{eq:cut-connectivity}
\end{align}

\subsubsection{Valid inequalities}
Adding all of the constraints of type \eqref{eq:cut-capacity} and \eqref{eq:cut-connectivity} will result in connected solutions that meet the depot capacity. However, we can reduce the number of cuts of type \eqref{eq:cut-capacity} and \eqref{eq:cut-connectivity} that need to be added by including new valid inequalities that strengthen our formulation. 

\paragraph{Depot Capacity VI:} This valid inequality  prevents sets of microhubs whose total demand exceeds the depot capacity from being included in the same connected component. We avoid these solutions by adding cuts defined in \eqref{capacityCut} to the MP, and we show that this cut is valid in Proposition \ref{Prop:inequality}.
\begin{align}
    \sum_{(i,j) \in \calA': i \in \hat{\calC}, j \in \hat{\calC}} x_{ij}P_j \leq (1-\sum_{(i,j) \in A': i \notin \hat{\calC}, j \in \hat{\calC}}x_{ij})P_{max} + n_dP_{max} \sum_{(i,j) \in \calA': i \notin \hat{\calC}, j \in \hat{\calC}}x_{ij} \label{capacityCut}
\end{align}

\begin{proposition} \label{Prop:inequality}
    Consider a set of nodes $\hat{\calC}$ that contains microhubs $\hat{\calM}$ and a depot $\hat{h}$, such that $\hat{\calM}$ exceeds the depot's capacity. Then \eqref{capacityCut} is valid for the $n_d$-UFT.
\end{proposition}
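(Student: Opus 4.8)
The plan is to read the left-hand side of \eqref{capacityCut} combinatorially and then split into two cases according to the number of arcs entering $\hat{\calC}$. First I would observe that, by constraint \eqref{c4}, every microhub $j$ carries at most one incoming arc, so the term $\sum_{i\in\hat{\calC}:(i,j)\in\calA'} x_{ij}P_j$ equals $P_j$ exactly when $j$'s unique incoming arc originates inside $\hat{\calC}$, and is $0$ otherwise. Hence the left-hand side equals the total demand of the microhubs of $\hat{\calC}$ whose incoming arc lies within $\hat{\calC}$; call this quantity $L$. Writing $S:=\sum_{(i,j)\in\calA':\,i\notin\hat{\calC},\,j\in\hat{\calC}} x_{ij}$ for the number of arcs entering $\hat{\calC}$ from outside, the right-hand side is $(1-S)P_{max}+n_dP_{max}S$, and the whole claim reduces to bounding $L$ against this expression in the two regimes $S=0$ and $S\ge 1$. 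I assume, as the statement intends, that $\hat{h}$ is the only depot contained in $\hat{\calC}$.

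For the case $S=0$ I would use the forest structure from Proposition \ref{prop:forest}. Since no arc enters $\hat{\calC}$, I claim every microhub counted in $L$ is served by $\hat{h}$: starting from such a microhub and tracing its unique path back toward its root in the forest, each predecessor arc must have its tail inside $\hat{\calC}$ (otherwise it would be an entering arc, contradicting $S=0$), so the path never leaves $\hat{\calC}$ and must terminate at the only depot it contains, namely $\hat{h}$. Therefore the microhubs contributing to $L$ form a subset of those served by $\hat{h}$, and the capacity constraint \eqref{c3} for depot $\hat{h}$ gives $L\le P_{max}$, which matches the right-hand side $(1-0)P_{max}=P_{max}$.

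For the case $S\ge 1$ I would instead use the global capacity bound. Summing \eqref{c3} over all $n_d$ depots shows that the total demand served by the network is at most $n_dP_{max}$, and since the microhubs counted in $L$ are all in the network, $L\le n_dP_{max}$. It then suffices to check that the right-hand side is at least $n_dP_{max}$ whenever $S\ge 1$; the identity
\begin{align*}
(1-S)P_{max}+n_dP_{max}S - n_dP_{max} = (1-S)(1-n_d)P_{max}
\end{align*}
makes this immediate, since both factors $1-S$ and $1-n_d$ are nonpositive for $S\ge 1$ and $n_d\ge 1$, so their product is nonnegative. Combining the two cases yields $L$ at most the right-hand side in all situations, establishing validity. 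The main obstacle is the $S=0$ case: the bound $P_{max}$ there is strictly smaller than the global $n_dP_{max}$, so it genuinely relies on the forest/connectivity argument that every counted microhub is routed through $\hat{h}$, and I would take care that this path-tracing argument is airtight (in particular that $\hat{\calC}$ contains exactly one depot, without which the $P_{max}$ bound would fail).
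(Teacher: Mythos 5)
Your proof is correct and takes essentially the same route as the paper's: the same case split on the number of arcs entering $\hat{\calC}$ (zero versus at least one), the bound $P_{max}$ coming from the depot capacity constraint \eqref{c3} applied to the component of $\hat{h}$ in the first case, and the algebraic verification that the right-hand side is at least $n_dP_{max}$ in the second. If anything, your write-up is more careful than the paper's, which merely asserts (without the path-tracing justification you give) that when no arc enters $\hat{\calC}$ all included nodes of $\hat{\calC}$ lie in the single connected component served by $\hat{h}$.
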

\begin{proof}
Inequality \eqref{capacityCut} requires the sum of the UFT satisfied demand from $\hat{\calM}$ to be less than $P_{max}$ if there are no arcs from $(\calM \cup \calH) \setminus \hat{\calC}$ into the set of microhubs and depot in $\hat{\calC}$ (i.e. all the nodes in $\hat{\calC}$ included in a MP solution are in the same connected component). When this is the case, $\sum_{(i,j) \in \calA': i \notin \hat{\calC}, j \in \hat{\calC}} x_{ij}$ will be equal to zero, and the right hand side of \eqref{capacityCut} will be $P_{max}$.
But, if there are arcs going into the set $\hat{\mathcal{C}}$, then cut \eqref{capacityCut} should not be binding since the microhubs in $\hat{\mathcal{M}}$ may be part of different connected components. In this case, the total demand associated with the nodes in $\hat{\mathcal{M}}$ can be as large as $n_d P_{max}$ in feasible solutions. When this occurs and there are $z$ edges from the set $(\mathcal{M} \cup \mathcal{H}) \setminus \hat{\mathcal{C}}$ in to the set $\hat{\mathcal{C}}$, the right hand side of \eqref{capacityCut} becomes Equation \ref{ref2}, making \eqref{capacityCut} non-binding.
\end{proof}
\begin{align}\label{ref2}
    (1-z)P_{max} + zn_dP_{max} = (1+z(n_d-1))P_{max} \geq n_dP_{max}
\end{align}

\paragraph{Depot Connectivity VI:}
In addition to the connectivity constraints \eqref{eq:cut-connectivity}, we can further enforce connectivity to the depots when an MP solution contains microhubs that are disconnected from the depots. 
Let $\calC_1, \calC_2, \dots, \calC_{n_d}$ be the components containing depots $h_1, h_2,\dots,h_{n_d}$, $\calC=\cup_{k=1}^{n_d} \calC_{k}$, and $k\in \calM\setminus\calC$ be a disconnected microhub. Then constraint \eqref{eq:cut-connectivity-depots} requires an arc out of one of the components containing a depot when depots $h_1, h_2, \dots, h_{n_d}$ and $k$ are included in the solution to the master problem.
\begin{align}
    \sum_{i\in \calC} \sum_{j\in (\calH\cup\calM)\setminus\calC: (i,j)\in \calA} x_{ij} \ge y_{k} + \sum_{k=1}^{n_d} y_{h_k} - n_d  \label{eq:cut-connectivity-depots}
\end{align}

\subsection{Implementation}\label{sec:implementation}
Given the exponentially many capacity and connectivity constraints in Section \ref{feas}, including each of these constraints in the MP is not feasible. However, we can iteratively add violated constraints to the MP in a cutting planes fashion until the solution to the MP becomes feasible.

To avoid trivially infeasible solutions, we explicitly add some constraints to the MP to reduce the number of required cuts. In particular, we add the simple cycle elimination constraints \eqref{eq:mp-9} and upper bounding constraints \eqref{eq:mp-10} to the MP.
\begin{align}
    & x_{ij} + x_{ji} \leq 1 && \forall (i,j) \in \mathcal{A'}: (j,i) \in \mathcal{A'}, i<j \label{eq:mp-9} \\
    & \sum_{i \in \calM} P_i y_i \leq P_{max}n_d \label{eq:mp-10}
\end{align}

Constraint \eqref{eq:mp-10} provides a bound on the total amount of demand that can be served by the UFT system based on the depot loading capacity. 
This is a modification of constraint \eqref{c3} that enforces an upper bound on the total flow through the system since we cannot track the flow associated with each depot without the flow variables.  

\subsubsection{Feasibility cuts.} 

\paragraph{Capacity constraints:} 
For a given MP solution $(\hat{\mathbf{x}}, \hat{\mathbf{y}})$, let $\calG(\hat{\mathbf{x}}, \hat{\mathbf{y}})$ denote the graph induced by $(\hat{\mathbf{x}}, \hat{\mathbf{y}})$. If $\calG(\hat{\mathbf{x}}, \hat{\mathbf{y}})$ contains exactly $n_d$ connected components, then by \eqref{eq:mp-forest-nodes-arcs}, $\calG(\hat{\mathbf{x}}, \hat{\mathbf{y}})$ is a forest and each of its connected components is a tree rooted at a depot. Consequently, we only need to check if the trees satisfy the capacity constraints and add a constraint of the form \eqref{eq:cut-capacity} for each tree (including the set of microhubs $\hat{\calM}$) that exceeds the loading capacity.   There could be constraints added for the entire tree $\hat{\calM}$ and all subtrees of $\hat{\calM}$ that violate the depot capacity.   Instead, to balance the strength of the cuts and the number of constraints added to the formulation, we found it most effective to add a single constraint for a minimally violating subtree $\Bar{\calM} \subseteq \hat{\calM}$. We identify a minimally violating subtree using a quick heuristic that returns a tree with high demand microhubs that are more likely to be included in subsequent MP solutions. We build this minimally violating tree in a greedy fashion.

\paragraph{Connectivity constraints:}
For each disjoint component $\hat{\calC}$ in a solution to the MP, we can add a cut of type \eqref{eq:cut-connectivity} for at least one microhub $k \in \hat{\calC}$. To avoid adding a very large number of weak cuts to the MP, we restrict $k$ to microhubs that are more likely to be included in an optimal solution. In our implementation, we consider the two microhubs in $\hat{\calC}$ that have the largest demand values. 

\subsubsection{Valid inequalities} 

\paragraph{Depot capacity VI:} We add a cut generated by the valid inequality \eqref{capacityCut} for each connected component that violates the depot capacity to strengthen the MP. We add this cut over the full set of microhubs and depot in $\hat{\calM} \cup \{\hat{h}\}$ that are connected and exceed the depot capacity in an MP solution, $(\hat{\mathbf{x}}, \hat{\mathbf{y}})$. This ensures large connected components from a MP solution are split into separate trees in subsequent MP solutions.

\paragraph{Depot Connectivity VI:}
We add a constraint of type \eqref{eq:cut-connectivity-depots} for each disjoint component. Instead of adding many weak cuts of this type, we add a single cut for the microhub $\hat{k}$ in the disjoint component $\hat{\calC}$ that has the highest demand, as this node is more likely to be included in subsequent solutions. In the following section, we compare the performance of our solution method with the MIP presented in Section \ref{Model1}.

\section{Computational results} \label{Results}
Here, we provide a set of computational experiments to show the impact of using a UFT network to serve delivery demand at microhubs located in an urban area, as well as the value of our cutting planes solution method. We provide a description of our experimental design in Section \ref{ExperimentalDesign.Network} and our different types of results in Sections \ref{method} and \ref{results}.

\subsection{Experimental design} \label{ExperimentalDesign.Network}
Using the formulation for the $n_d$-UFT problem in Section \ref{Model1}, we can find optimal solutions for a variety of instances derived using data from Chicago and New York City. 
Both cities have large urban populations but have very different geographical features. For all instances, we use Gurobi software to implement our proposed solution method to solve problem instances to optimality or to the best solution found within a time limit of 5 hours. All solutions found within 5 hours have a gap of 0.01\% or less.

 \subsubsection{Microhubs and depots}
In our test instances, we consider microhubs distributed across both cities that are spaced 1.5 miles apart.  This provides an area with a radius of approximately $3/4$ mile served by each microhub, consistent with estimations of the distance that can be walked in discussions of 15-minute cities \citep{15min}. We use a service area of this size for each microhub so that the last meter of delivery may be completed by customers picking up their own packages at the microhub or deliveries from the microhub could be completed by e-bikes or other environmentally friendly methods. To determine the length of the arcs between adjacent microhubs, we use road network data provided by Open Street Maps (\cite{OpenStreetMap}) via the Veroviz package (\cite{veroviz2020}). 
We assume that depots used for the UFT network will be located outside the area to be covered by UFT, similar to distribution centers or UCCs located outside of a city or urban area.  To show how the use of multiple depots affects the impact of the UFT network, we compute solutions for $n_d = 1$ and $n_d = 2$ for both Chicago and New York City with a candidate set of potential depot locations.

The locations of microhubs for Chicago and New York City are shown in Figure \ref{fig:Demand Centers} with red markers, and potential depot locations are shown with blue markers. An illustration of a solution using direct arcs between microhubs is shown in Figure \ref{fig:ArcExample}, and this same solution mapped to the road network using the shortest road distance between microhubs is shown in Figure \ref{fig:RoadExample}. For clarity, we will present solutions in Section \ref{results} using the method in Figure \ref{fig:ArcExample}, but the length of the arcs in the model is determined by the road network distance between them.

\begin{figure}
     \centering
     \begin{subfigure}[b]{0.313\textwidth}
         \centering
         \includegraphics[width=\textwidth]{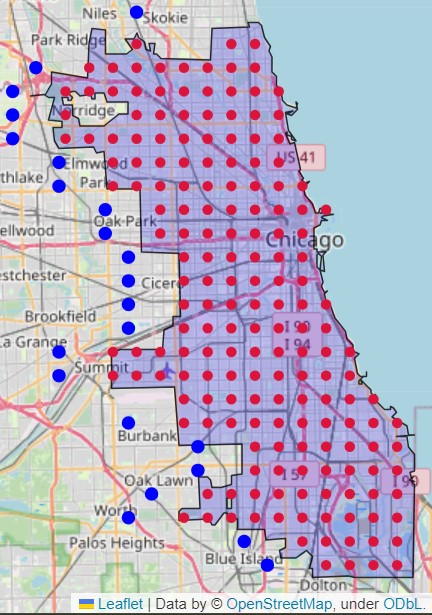}
         \caption{Chicago}
         \label{fig:Chicago}
     \end{subfigure}
     \hspace{3 pt}
     \begin{subfigure}[b]{0.462\textwidth}
         \centering
         \includegraphics[width=\textwidth]{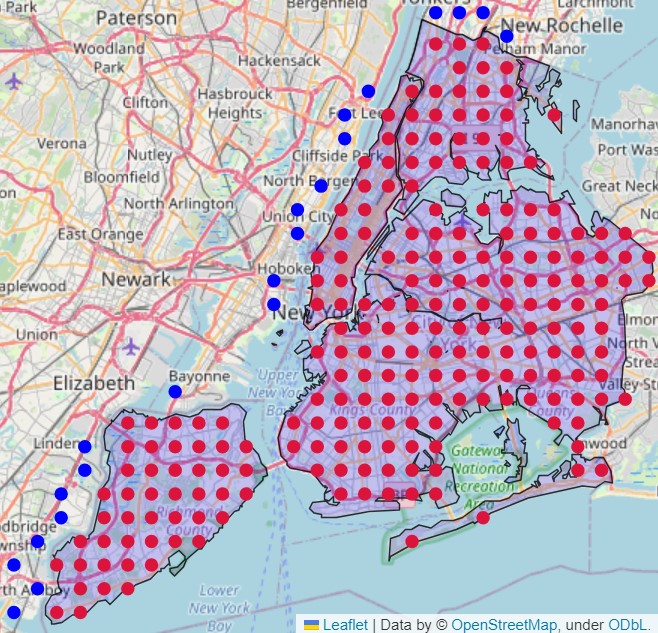}
         \caption{New York City}
         \label{fig:NYC}
     \end{subfigure}
        \caption{Microhubs (red markers) Located 1.5 Miles apart and Potential Depot Locations (blue markers)}
        \label{fig:Demand Centers}
\end{figure}

\begin{figure}
     \centering
     \begin{subfigure}[b]{0.4\textwidth}
         \centering
         \includegraphics[width=\textwidth]{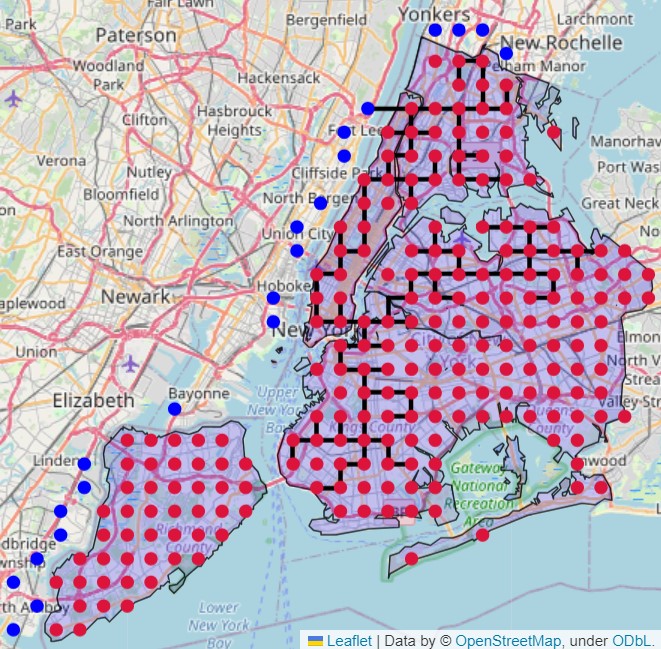}
         \caption{Arc Solution}
         \label{fig:ArcExample}
     \end{subfigure}
     \hspace{3 pt}
     \begin{subfigure}[b]{0.4\textwidth}
         \centering
         \includegraphics[width=\textwidth]{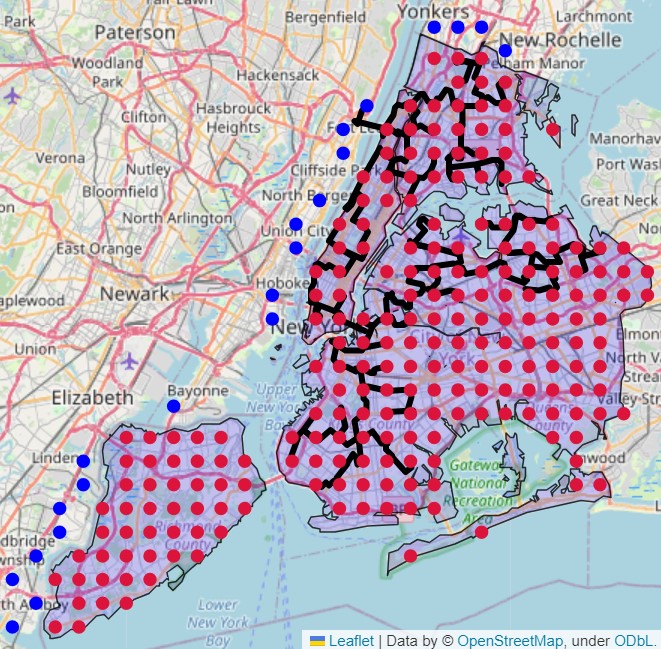}
         \caption{Road Network Solution }
         \label{fig:RoadExample}
     \end{subfigure}
        \caption{Example of Arc and Road Network Solutions in New York City}
        \label{fig:ArcSolutions}
\end{figure}

\subsubsection{Loading Capacity}
 The lead time required for the automated loading of carriages into the UFT system is 0.05 seconds per carriage. This equates to 1200 totes entering the network each minute. Each tote fits in a carriage. Based on data reported by the \cite{UKdft} and the volume of UFT totes of 11.65 $\text{ft}^3$, \cite{magway} estimates that between 80\% and 90\% of the goods that would typically be transported by road can fit into UFT totes.

Using a limit on the maximum loading time, $T$, at a depot and the lead time for each carriage, we define the maximum number of packages that can be delivered from a depot as $P_{max} = T/\tau$, where $\tau = 0.05/3600$, the lead time required for each carriage in hours.  We assume a loading time limit of $T = 12$ hours, giving $P_{max} = 864,000$. This allows time for carriages to return to the depot to be re-loaded for the next set of deliveries and for necessary maintenance on the carriages and UFT track.

\subsubsection{Demand at microhubs}

We test a set of demand levels at the microhubs to see how the effect of using a UFT network varies with different levels of usage. We determine the number of deliveries at each microhub using the population of the zip code in which the microhub is located (\cite{ZipCodeBoundaries}, \cite{PopulationData}, \cite{NYCPopulationData}) and the percentage of the population that has UFT delivery demand. To assign demand to each microhub, we divide a given percentage of the population in a zipcode evenly among the microhubs in the zip code.  We test UFT demand values of $10\%$, $15\%$, and $20\%$ of the population.  We center these values at $15\%$ of the population since \cite{haag20191} found that approximately $15\%$ of New York City households receive a package on any given day. We assume that a similar percentage of customers receive packages daily in Chicago as well. However, UFT delivery demand may vary due to the reluctance of service providers to move some or all of their deliveries to the new system or the fact that some packages cannot be transported via tunnels due to size or other factors. Also, as the volume of e-commerce orders increases, the percentage of households that receive a package each day will increase as well.  

There are exceptions that should be noted. For zip codes that do not contain a microhub, their demand is added to the nearest microhub. We determine the nearest microhub by calculating the distance from the centroid of the zip code without a microhub to the other microhubs. Figure \ref{fig:FixedDemand} shows an example of demand assigned to a set of microhubs where some zip codes do not have microhubs located within them.  These zip codes without microhubs are indicated by the dark red color, and the demand associated with these zip codes is assigned to the nearest microhub for each zip code. The maximum and minimum demand values at microhubs for each level of UFT demand are given in Table \ref{Tab:Demand}. This table shows the wide range of demand at different microhubs in a city, which will have a big impact on the resulting UFT networks. Alternative ways to distribute the demand in the red zip codes, as well as capacity limits on microhubs, are options for future work.

\begin{table}[h!] 
\begin{center}
\begin{tabular}{ |c|c|c|c|r| } 
 \hline
 & {$\%$ of} & {Min Demand} & {Max Demand} & {Total}\\
 {City}& {Population} & {at a Microhub} & {at a Microhub} & {Demand} \\
 \hline
  & 10\% & 215 & 10,893 & 297,338 \\ 
  Chicago & 15\% & 322 & 16,340 & 446,007 \\
  & 20\% & 429 & 21,787 & 594,676 \\
  \hline
  & 10\% & 395 & 18,080 & 838,969 \\
 New York City & 15\% & 592 & 27,120 & 1,258,454 \\
  & 20\% & 789 & 36,160 & 1,677,939 \\
 \hline
\end{tabular}
\caption{Demand Levels for Chicago and New York City} \label{Tab:Demand}
\end{center}
\end{table}

\subsubsection{Distance budgets}

In our set of experiments, we consider budgets on the length of tunnels built between 15 and 270 miles for Chicago and between 15 and 360 miles for New York City at increments of 15 miles for both cities. The larger range of budgets tested in New York City reflects the larger number of tunnels required to include all potential microhub locations in the UFT network.

\subsubsection{Performance measures}\label{measures}

To compute the measures representing savings from traditional delivery models,  we will compare the use of a UFT network to a collaborative delivery setting utilizing  UCCs.  These UCCs can use their own resources to consolidate and deliver packages to an urban area in an efficient manner. This is similar to the collaborative setting considered in \cite{handoko2014auction} and discussed in Section \ref{collaboration}. Collaborative delivery from UCCs requires a smaller number of trucks  and a lower cost of truck delivery from a depot to microhubs located within an urban area compared to several vendors making their own deliveries. We make this assumption to give a conservative estimate of the number of trucks, truck miles, and emissions saved by using a UFT network to make deliveries to a set of microhubs.

The number of trucks needed to serve a microhub is calculated from the daily demand at the microhub divided by the number of totes that can be carried in a large delivery truck. From \cite{TRL}, 960 totes can be delivered using a large delivery truck (\cite{magway}). We then sum over the microhubs included in the UFT network to determine the total number of truck trips saved for a range of budgets on tunnel length. The truck miles saved at a microhub are calculated as the number of trucks needed to serve the microhub multiplied by twice the road distance from the depot to the microhub. We multiply the distance to the microhub by two to account for the return trip to the depot. The miles saved are summed over all the microhubs that are included in the UFT network.

To estimate the reduction in emissions that will result from using a UFT network, we estimate the emissions that would be produced by serving the set of microhubs included in the UFT network by large trucks or delivery vans. For large delivery trucks, the \cite{EnvironmentalDefenseFund} estimates $\text{CO}_2$ emissions of 1,700 grams per mile in their Green Freight Handbook.  Delivery vans generate 248 grams of $\text{CO}_2$ emissions per mile \citep{TransportAndEnvironment}. For delivery by large truck, we use the same number of trucks and truck miles estimated as described above. For delivery vans, we use the estimate from \cite{amazon} that about 350 packages can fit in a van to determine the number of vans needed for each microhub. We then calculate the number of miles traveled by vans to serve a microhub by multiplying the number of vans needed by twice the road distance between the depot and the microhub.

\begin{figure}[!ht]
    \centering
    \includegraphics[width = 6cm]{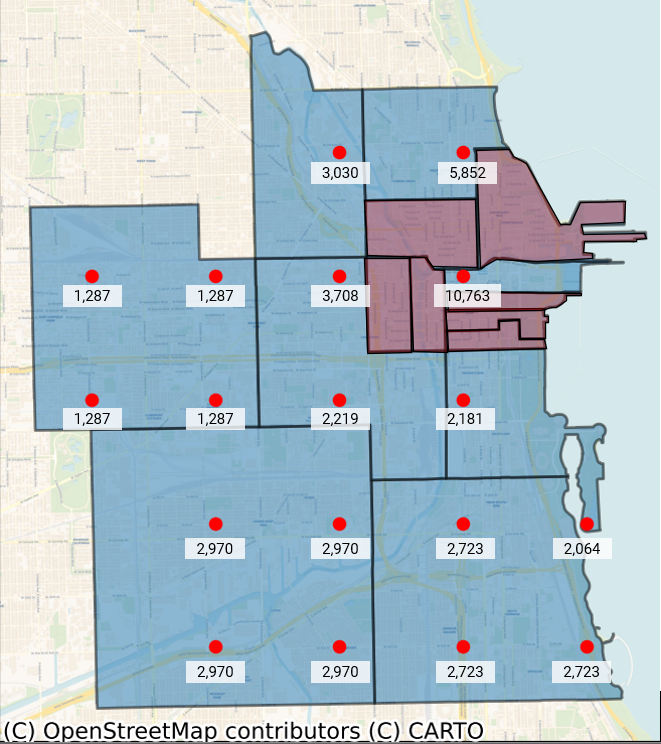}
    \caption{Equally divided demand based on $15\%$ of Zip Code Population}
    \label{fig:FixedDemand}
\end{figure}

\subsection{Performance of Decomposition} \label{method}
{We demonstrate the performance of our solution methodology for both the Chicago and New York City instances described in Section \ref{ExperimentalDesign.Network} at the $15\%$ demand level. The run time for the mixed integer multicommodity flow formulation from Section \ref{Model1} and the decomposition approach from Section \ref{sec:decomp} are given in Tables \ref{Table: DirectedChicago} and \ref{Table: DirectedNYC}. The results for the multicommodity flow model are labeled MIP and our decomposition approach are labeled MP. We solve the multicommmodity flow formulation using Gurobi, and implement MP using a Gurobi call back function, both of which are coded in Python. We enforced a two hour time limit on all instances and denote that the runtime is $> 7200$ for instances that could not be solved to optimality within this limit.

For instances in Chicago, the results in Table \ref{Table: DirectedChicago} demonstrate that MP outperforms the MIP formulation across all instances, significantly reducing the run time. The longest MIP run times occur for values of $D$ greater than 240 in both the $n_d = 1$ and $n_d = 2$ cases, with a maximum run time over 700 seconds when $n_d = 1$ and as large as 404 seconds when $n_d = 2$. But, MP is able to solve these instances to optimiality in less than one second. 

For New York City instances, in Table \ref{Table: DirectedNYC},  when $n_d = 1$, MP provides shorter running times for most instances, with the exception of the case where $D = 105$. For this problem, the running time of the MIP and MP are similar, with a difference of less than seven seconds. When $D \geq 135$, the MIP becomes significantly harder to solve, with running times as large as 5,007 seconds. For all of the instances with $D \geq 135$, MP was able to reach an optimal solution in less than two seconds.

When $n_d = 2$ for New York City instances, Table \ref{Table: DirectedNYC} indicates MP has shorter running times than the MIP for cases with $D < 240$. For this range of tunnel budgets, MP has a maximum running time of approximately 29 seconds while the MIP has a maximum running time of approximately 136 seconds. But, when $D > 240$, the problem becomes much more challenging to solve for both the MIP and MP. For these instances, the tunnel budget is large enough that the depot capacity becomes binding. When demand is distributed such that there are sets of high demand nodes clustered together and located far enough away from the set of potential depot locations that it is not possible to cover this set of demand nodes using two depots with the given tunnel budget, it becomes challenging to find solutions that respect capacity constraints  while serving as many microhubs as possible in the high demand area. For tunnel budgets between $D=240$ and $D= 285$, MP is unable to find an optimal solution in less than two hours, but provides an improvement to the MIP for tunnel budgets $D\geq 300$. }

\begin{table}
\begin{center}
\begin{tabular}{ |c|c|c|c||c|c|c| } 
 \hline
  & & MIP & MP & & MIP & MP \\ 
 $d$ & $n_d$ & Run Time (s) & Run Time (s) & $n_d$ & Run Time (s) & Run Time (s)\\
 \hline
 15 & 1 & 12.11 & 1.27 & 2 & 21.47 & 1.19 \\
 \hline
 30 & 1 & 6.07 & 0.99 & 2 & 4.80 & 0.95\\ 
 \hline
 45 & 1 & 4.78 & 0.87 & 2 & 5.45 & 0.72 \\
 \hline
 60 & 1 & 4.22 & 0.40 & 2 & 3.77 & 0.42 \\
 \hline 
 75 & 1 & 4.48 & 0.59 & 2 & 5.45 & 0.56 \\
 \hline
 90 & 1 & 7.55 & 0.65 & 2 & 4.42 & 0.58 \\
 \hline
 105 & 1 & 9.02 & 1.86 & 2 & 7.36 & 1.42 \\
 \hline
 120 & 1 & 17.25 & 1.19 & 2 & 16.81 & 1.28\\
 \hline
 135 & 1 & 18.60 & 0.53 & 2 & 12.39 & 0.82 \\
 \hline
 150 & 1 & 8.03 & 0.46 & 2 & 8.77 & 0.49 \\
 \hline
 165 & 1 & 19.89 & 0.68 & 2 & 10.79 & 0.49 \\
 \hline
 180 & 1 & 11.09 & 0.39 & 2 & 8.55 & 0.56 \\
 \hline
 195 & 1 & 10.78 & 0.58 & 2 & 18.06 & 0.93 \\
 \hline
 210 & 1 & 48.09 & 0.88 & 2 & 125.30 & 0.82  \\
 \hline
 225 & 1 & 6.53 & 0.50 & 2 & 36.00 & 0.81 \\
 \hline
 240 & 1 & 463.61 & 0.50 & 2 & 5.99 & 0.96 \\
 \hline
 255 & 1 & 718.23 & 0.30 & 2 & 404.32 & 0.41 \\
 \hline
 270 & 1 & 468.53 & 0.31 & 2 & 267.04 & 0.45 \\
 \hline
\end{tabular}
\caption{Results for $n_d = 1$ and $n_d = 2$ for Chicago Instances at a $15\%$ Demand Level}
\label{Table: DirectedChicago}
\end{center}
\end{table}

\begin{table}
\begin{center}
\begin{tabular}{ |c|c|c|c||c|c|c| } 
 \hline
  & & MIP & MP & & MIP & MP \\ 
 $d$ & $n_d$ & Run Time (s) & Run Time (s) & $n_d$ & Run Time (s) & Run Time (s)\\
 \hline
 15 & 1 & 25.48 & 0.96 & 2 & 24.70 &  0.22 \\
 \hline
 30 & 1 & 22.22 & 1.77 & 2 & 20.00 &  0.71 \\ 
 \hline
 45 & 1 & 21.08 & 13.50 & 2 & 17.72 &  1.61 \\
 \hline
 60 & 1 & 13.69 & 9.42 & 2 & 17.95 &  3.88  \\
 \hline 
 75 & 1 & 30.04 & 11.15 & 2 & 17.24 &  2.63 \\
 \hline
 90 & 1 & 21.99 & 7.92 & 2 & 17.08 &  1.88 \\
 \hline
 105 & 1 & 15.58 & 22.10 & 2 & 15.90 &  6.97 \\
 \hline
 120 & 1 & 24.65 & 21.89 & 2 & 18.83 &  5.29 \\
 \hline
 135 & 1 & 5007.03 & 1.96 & 2 & 17.54 &  6.11 \\
 \hline
 150 & 1 & 3334.96 & 1.00 & 2 & 16.86 &  7.57 \\
 \hline
 165 & 1 & 4809.79 & 0.53 & 2 & 19.79 & 8.53 \\
 \hline
 180 & 1 & 1578.84 & 0.20 & 2 & 16.80 & 2.22 \\
 \hline
 195 & 1 & 4194.64 & 0.07 & 2 & 16.98 &  4.01 \\
 \hline
 210 & 1 & 1586.15 & 0.63 & 2 & 30.67 & 3.09 \\
 \hline
 225 & 1 & 939.77 & 0.12 & 2 & 135.91 & 28.64 \\
 \hline
 240 & 1 & 356.84 & 0.10 & 2 & $>$ 7200 & $>$ 7200 \\
 \hline
 255 & 1 & 275.57 & 0.10 & 2 & 6042.32 & $>$ 7200 \\
 \hline
 270 & 1 & 1423.43 & 0.07 & 2 & 5091.68 & $>$ 7200 \\
 \hline 
 285 & 1 & 954.49 & 0.14 & 2 & 5415.18 & $>$ 7200\\
 \hline
 300 & 1 & 452.55 & 0.10 & 2 & 6667.37 & 1665.61 \\
 \hline 
 315 & 1 & 524.25 & 0.14 & 2 & $>$ 7200 & 2638.33 \\
 \hline
 330 & 1 & 614.13 & 0.13 & 2 & 5688.09 & 1622.40 \\
 \hline
 345 & 1 & 704.38 & 0.11 & 2 & 4027.40 & 3096.31 \\
 \hline
 360 & 1 & 420.28 & 0.11 & 2 & $>$ 7200 & 689.55 \\
 \hline
\end{tabular}
\caption{Results for $n_d = 1$ and $n_d = 2$ for New York City Instances at a $15\%$ Demand Level}
\label{Table: DirectedNYC}
\end{center}
\end{table}

\subsection{Performance Metrics for UFT Networks} \label{results}
Here we provide results for the amount of demand that can be served by UFT systems, the number of trucks and miles saved daily, and the environmental impact of using a UFT system. 

\subsubsection{Demand coverage}

We evaluate the percentage of total microhub demand served by the UFT network for a range of tunnel budgets for both Chicago and New York City. In Figure \ref{fig:Demand1Depot}, we present the percentage of demand served by the UFT network for each level of demand when $n_d = 1$. In both Chicago and New York City, the demand served by the UFT network increases rapidly with the tunnel budget when $D$ is small. With only 45 miles of tunnels, approximately $42\%$ of package demand in Chicago and approximately $32\%$ of package demand in New York City can be served with a UFT network. This reduces the number of packages served via traditional delivery methods to 58\% and 68\% of their original values. Almost all microhub demand can be served by 250 miles of tunnels in Chicago and 350 miles in New York City (with demand at 10\% of population). 

\noindent\textit{Insight 1:  Low tunnel mileage can potentially create a tremendous reduction in the flow of packages via traditional delivery methods. }

An example demonstrating how different tunnel budgets are used is provided in Figure \ref{fig:ChicagoExamplesStructure} for the city of Chicago. When the tunnel budget is small, such as 75 miles in Figure \ref{fig:Chicago75},  the arcs selected for the tunnel connect a depot to downtown Chicago and cover the high demand microhubs located in the northern part of the city and along the lake, serving approximately $50\%$ of delivery demand. As the tunnel budget increases (see Figures \ref{fig:Chicago150} and \ref{fig:Chicago225}), these high demand microhubs continue to be included in the solution along with other lower demand microhubs. We note that the location of the selected depot also changes as the tunnel budget increases.

\begin{figure}
     \centering
     \begin{subfigure}[b]{0.49\textwidth}
    \centering
    \includegraphics[width = 1\textwidth]{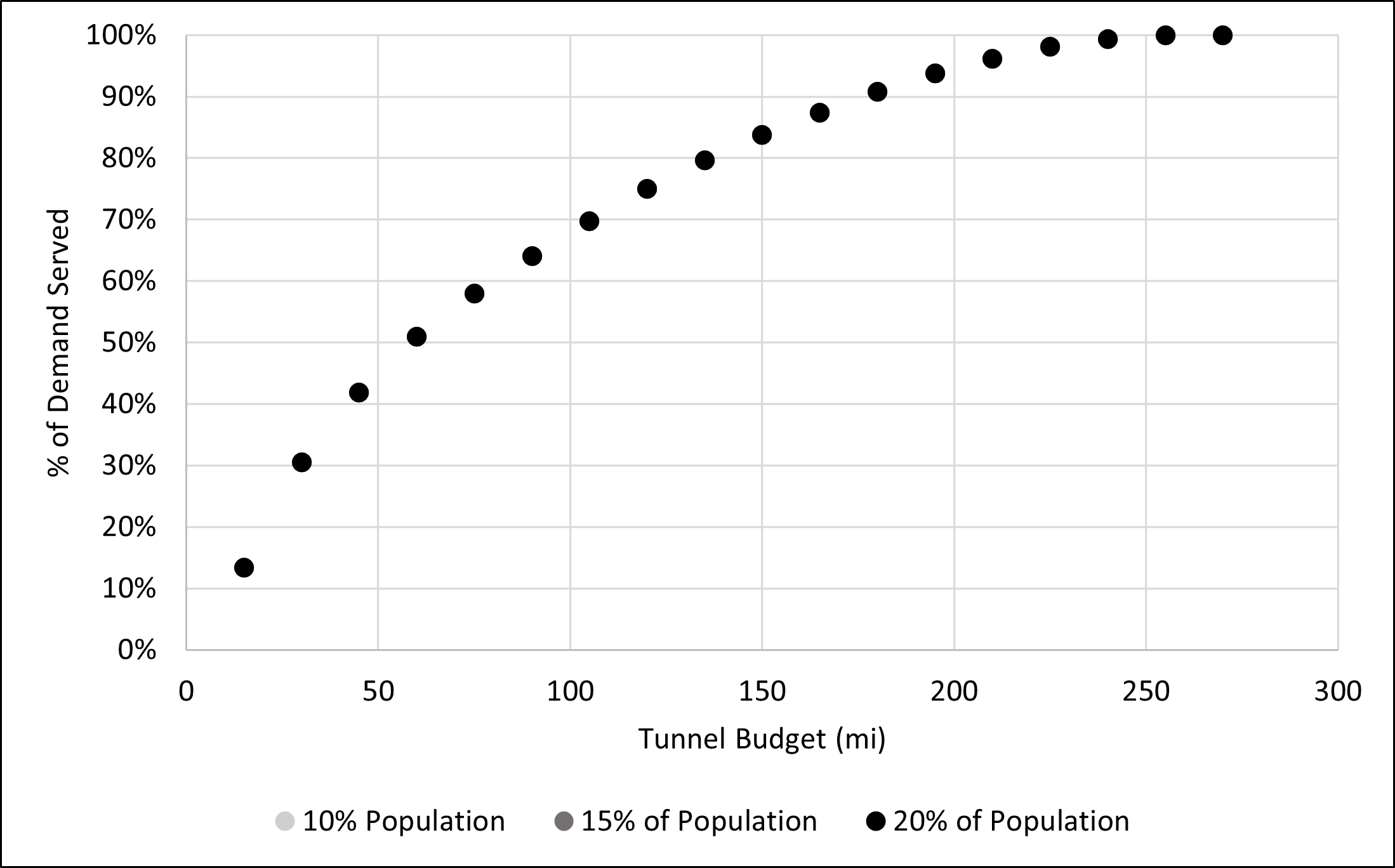}
    \caption{Chicago}
    \label{fig:ChicagoDemand1Depot}
     \end{subfigure}
     \hfill
     \begin{subfigure}[b]{0.49\textwidth}
         \centering
    \includegraphics[width = 1\textwidth]{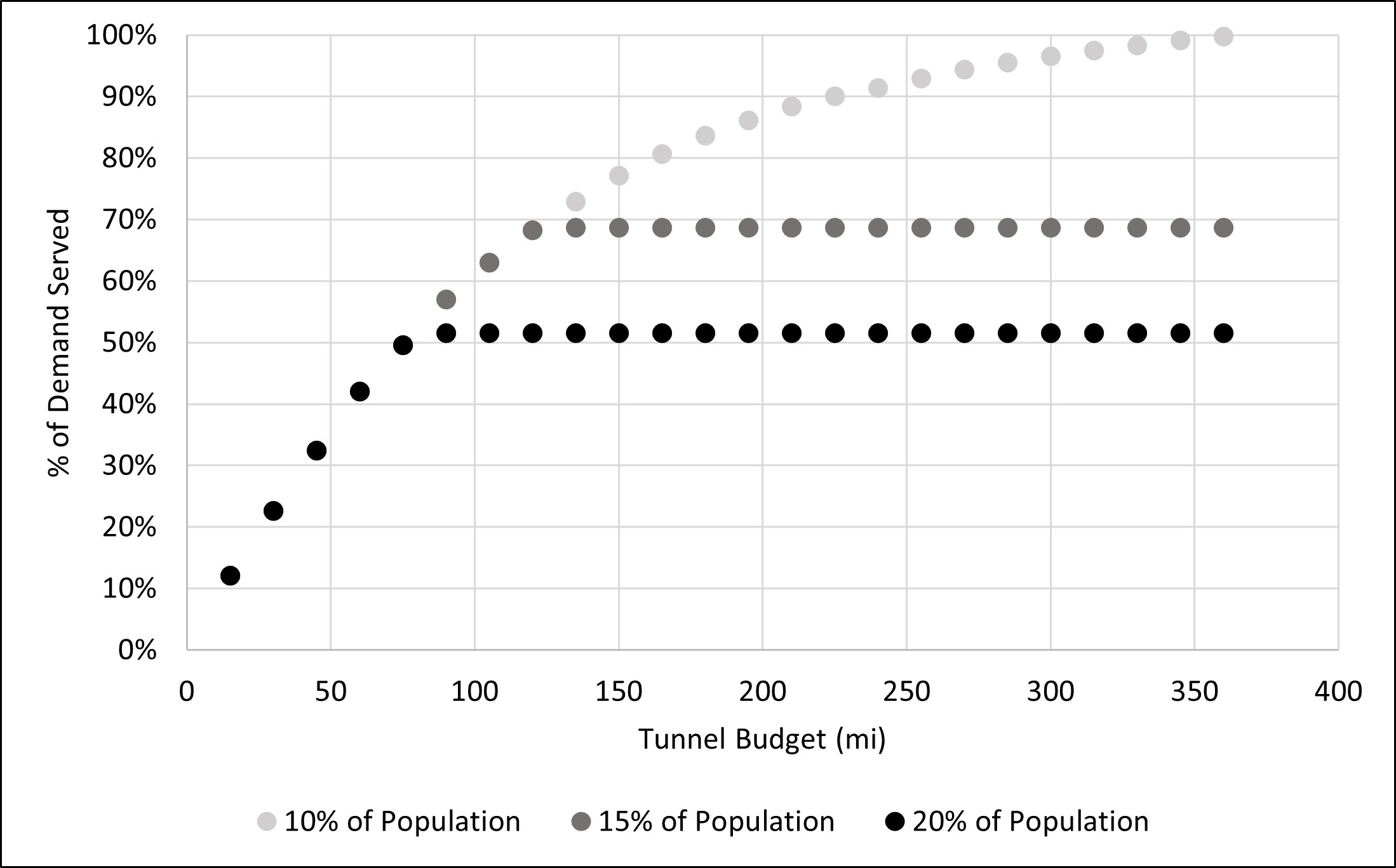}
    \caption{New York City}
    \label{fig:NewYorkDemand1Depot}
     \end{subfigure}
        \caption{Percent of Demand Served for the 1-UFT}
        \label{fig:Demand1Depot}
\end{figure}

\begin{figure}
     \centering
     \begin{subfigure}[b]{0.317\textwidth}
         \centering
    \includegraphics[width = 1\textwidth]{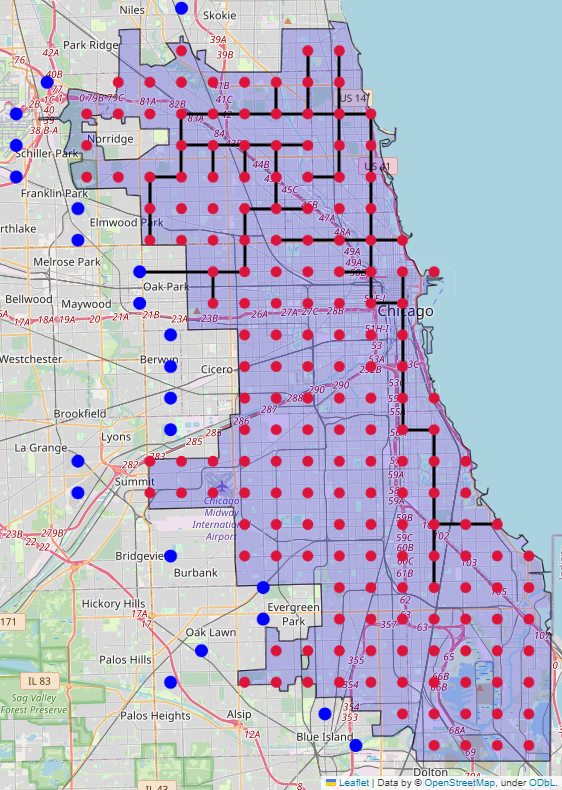}
    \caption{Tunnel Budget of 75 Miles}
    \label{fig:Chicago75}
     \end{subfigure}
     \hfill
     \begin{subfigure}[b]{0.315\textwidth}
         \centering
    \includegraphics[width = 1\textwidth]{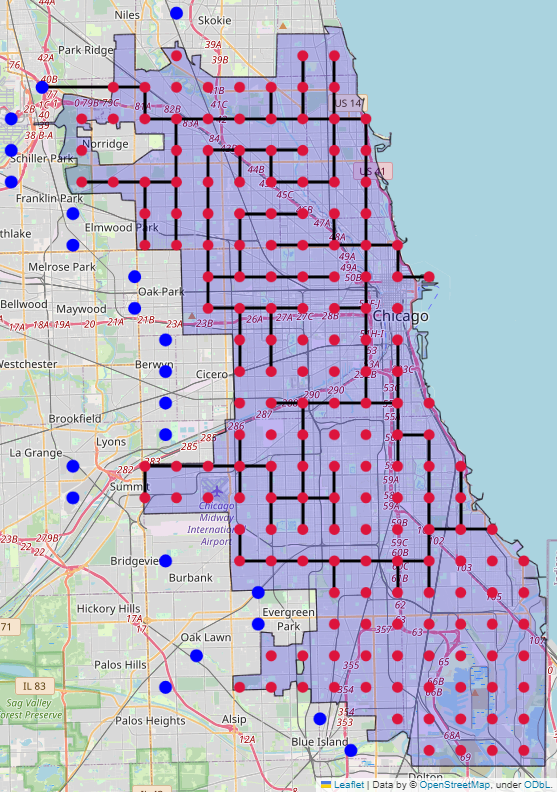}
    \caption{Tunnel Budget of 150 Miles}
    \label{fig:Chicago150}
     \end{subfigure}
     \hfill
     \begin{subfigure}[b]{0.315\textwidth}
         \centering
    \includegraphics[width = 1\textwidth]{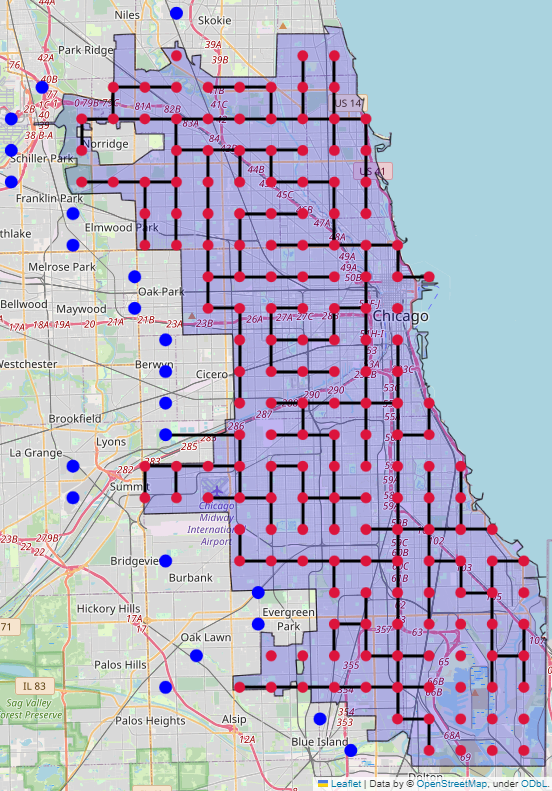}
    \caption{Tunnel Budget of 225 Miles}
    \label{fig:Chicago225}
     \end{subfigure}
    \caption{Chicago Solutions with 20\% Demand Levels and $n_d = 1$ }
    \label{fig:ChicagoExamplesStructure}
\end{figure}

For Chicago, in Figure \ref{fig:ChicagoDemand1Depot}, for all levels of demand, the percentage of total demand served is the same at each tunnel budget. This is because all of the demand in Chicago can be served by one depot within the 12 hour loading time limit when demand is calculated as $10\%$, $15\%$, or $20\%$ of the population. But, when the depot loading capacity is binding, as it is with New York City, the amount of demand served by the UFT network will vary with demand level. This can be seen in the results for New York City in Figure \ref{fig:NewYorkDemand1Depot}. When demand is $10\%$ of the population, then the depot loading capacity is not binding for any tunnel budget, and the percent of demand served follows a similar curve to that in Chicago. When the level of UFT demand is $15\%$ of the population, at most $69\%$ of demand can be served from one depot since the depot loading capacity becomes binding for tunnel budgets greater than 120 miles. Similarly, when a demand level of $20\%$ of the population is considered, the loading capacity becomes binding when the tunnel budget is greater than 90 miles. When the tunnel budget is less than 90 miles, the same percentage of demand is served by the UFT network as in $10\%$ and $15\%$ demand levels. But, when the tunnel budget exceeds 90 miles, the depot loading capacity is reached, and no more than $51\%$ of demand can be served by the UFT network.

\noindent \textit{Insight 2: When the depot loading capacity becomes binding, there are no further increases in total demand served even with increasing tunnel budgets. }

Increasing the number of depots allows a larger percentage of demand to be served by the UFT system since the loading capacity increases by $P_{max}$ for each depot added. In Figure \ref{fig:NewYorkDemand20}, we show the percentage of UFT demand that is served by the UFT network when one and two depots are used in New York City, and demand is $20\%$ of the population. We only show results for New York City since the depot loading capacity is not binding in Chicago. For New York City, using two depots allows more demand to be served by UFT at all tunnel budgets considered, but especially for tunnel budgets over 90 miles as shown in Figure \ref{fig:NewYorkDemand20}. When the tunnel budget is less than 90 miles, slightly larger percentages of total demand can be served when two depots are used due to the ability to choose two depots near different areas of high-demand microhubs. 
And, when the tunnel budget is over 90 miles, the percentage of demand served continues to increase when two depots are used since the loading capacity of the UFT network is now large enough to serve all demand in New York City at a $20\%$ demand level. This creates an interesting change in the structure of the solutions. 
When two depots are used in New York City with a large tunnel budget,  more high demand nodes can be included in the solution (in Figure \ref{fig:NYC.150.2depots}) than with one depot (in Figure \ref{fig:NYC.150.1depot}), due to the added loading capacity. The two depots selected in Figure \ref{fig:NYC.150.2depots} are near the high demand areas of Manhattan and the Bronx.  

\noindent \textit{Insight 3: When the depot loading capacity is binding, adding a depot improves solutions in terms of the percentage of demand served by a UFT system.}

\begin{figure}
    \centering
    \includegraphics[width = 0.6\textwidth]{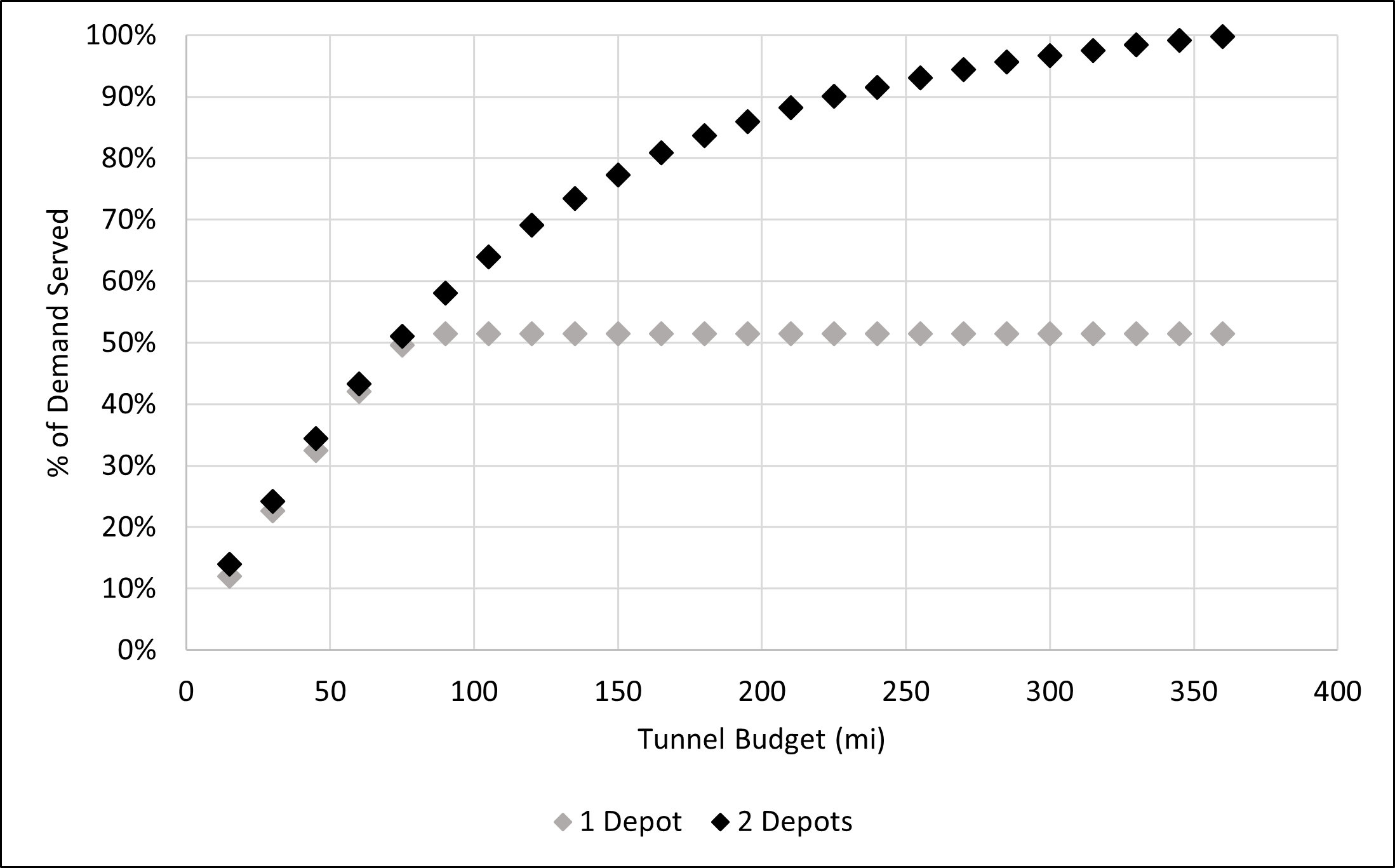}
    \caption{Percent of Demand Served in New York City at a Demand Level of $20\%$}
     \label{fig:NewYorkDemand20}
\end{figure}

 \begin{figure}
     \centering
     \begin{subfigure}[b]{0.49\textwidth}
         \centering
    \includegraphics[width = 1\textwidth]{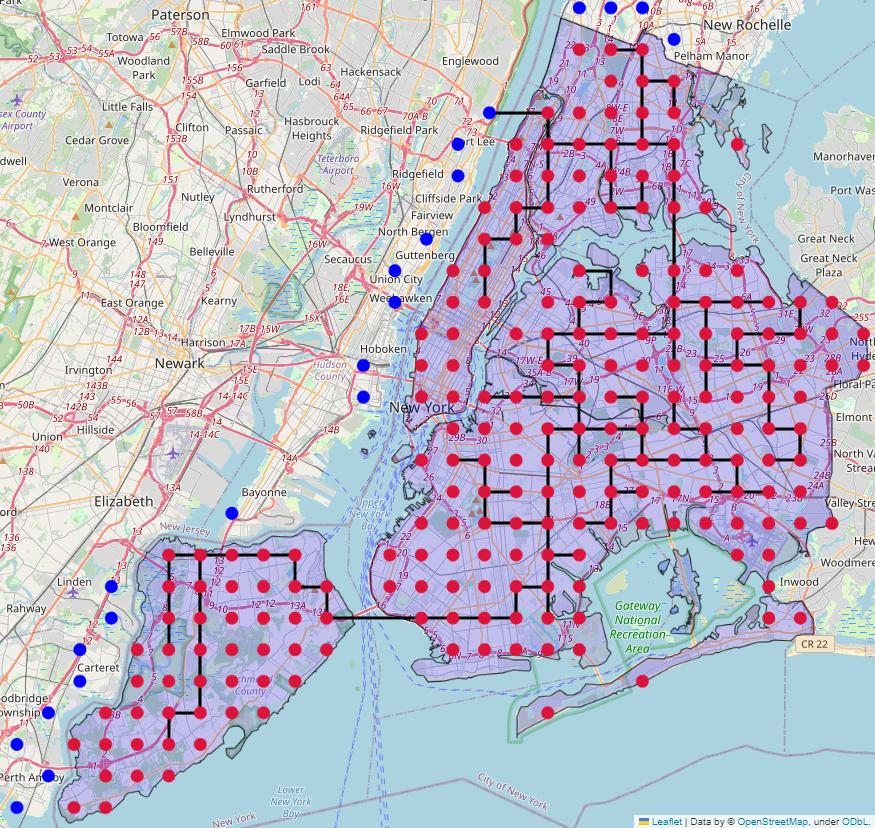}
    \caption{1 Depot}
    \label{fig:NYC.150.1depot}
     \end{subfigure}
     \hfill
     \begin{subfigure}[b]{0.49\textwidth}
         \centering
    \includegraphics[width = 1\textwidth]{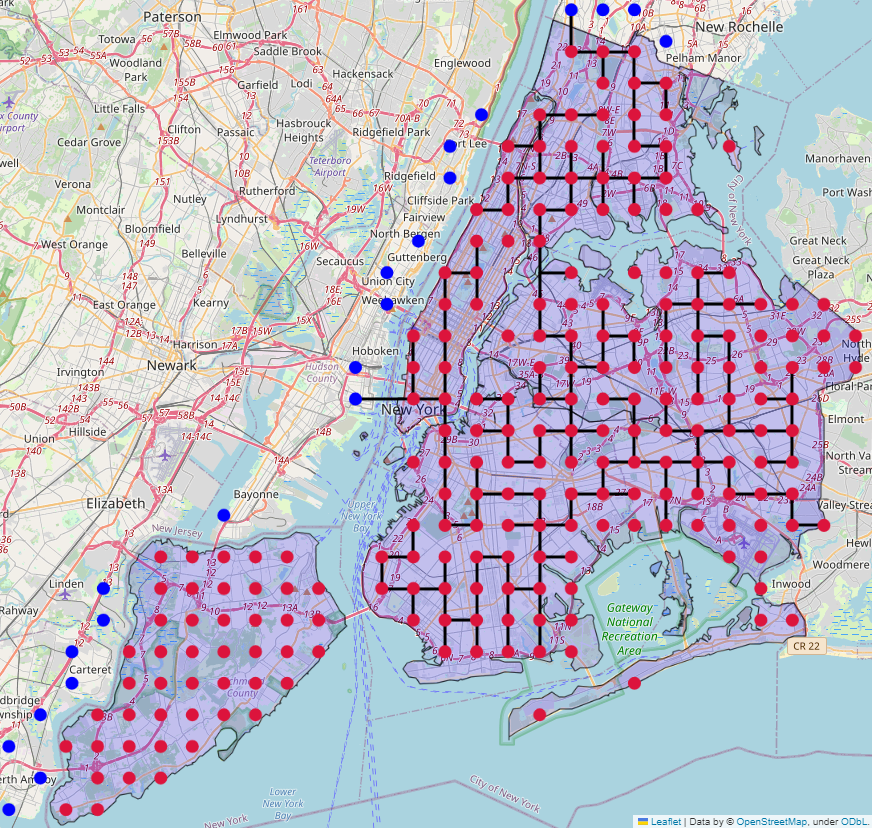}
    \caption{2 Depots}
    \label{fig:NYC.150.2depots}
     \end{subfigure}
    \caption{New York City Solutions with 20\% Demand Levels and $d = 225$}
    \label{fig:NYCExamplesStructureDepots}
\end{figure}

\subsubsection{Truck trips}
Since one of the primary advantages of a UFT network is the reduction of the number of delivery trucks on the road, it is important to estimate the number of truck trips saved daily by the use of a UFT system.  As noted in Section \ref{measures}, we compute these savings compared to a collaborative delivery setting.  The results for a range of tunnel budgets and demand levels when $n_d = 1$  are illustrated in Figure \ref{fig:Trucks1Depot}. The shape of the curve for the number of truck trips saved for both Chicago and the $10\%$ demand level in New York City are similar to the curve for the percentage of demand served for these instances. For Chicago, since the depot loading capacity is large enough to serve all demand, the number of truck trips saved increases with the demand level for all tunnel budgets with 49-393 trips saved daily at a 10\% demand level and 88-718 trips saved daily at a 20\% demand level depending on the value of $d$. For New York City, when the tunnel budget is less than 90 miles, we see a similar trend with a larger number of truck trips saved for higher levels of UFT demand. But as the tunnel budget increases and the depot loading capacity is reached for demand levels of $15\%$ and $20\%$, the savings are no longer increasing. When the demand level is $10\%$ of the population, and the depot loading capacity is not constraining, a maximum of 984 truck trips are saved when all demand is served by the UFT network. When the demand level is at $15\%$ of the population, and the tunnel budget is over 120 miles, the depot loading capacity is reached, and the number of trucks saved stabilizes at 957 truck trips. Even with loading capacity restrictions, the numbers for New York City are larger than Chicago due to the higher population and thus number of packages.

\noindent \textit{Insight 4:  A UFT can save hundreds of truck trips daily, with the precise amount depending on the tunnel budget, demand at microhubs/population, geography, and depot loading capacity.}

\noindent \textit{Insight 5: When the depot loading capacity is binding, then the daily number of truck trips saved will stabilize at a given tunnel budget.}

\begin{figure}
     \centering
     \begin{subfigure}[b]{0.49\textwidth}
         \centering
    \includegraphics[width = 1\textwidth]{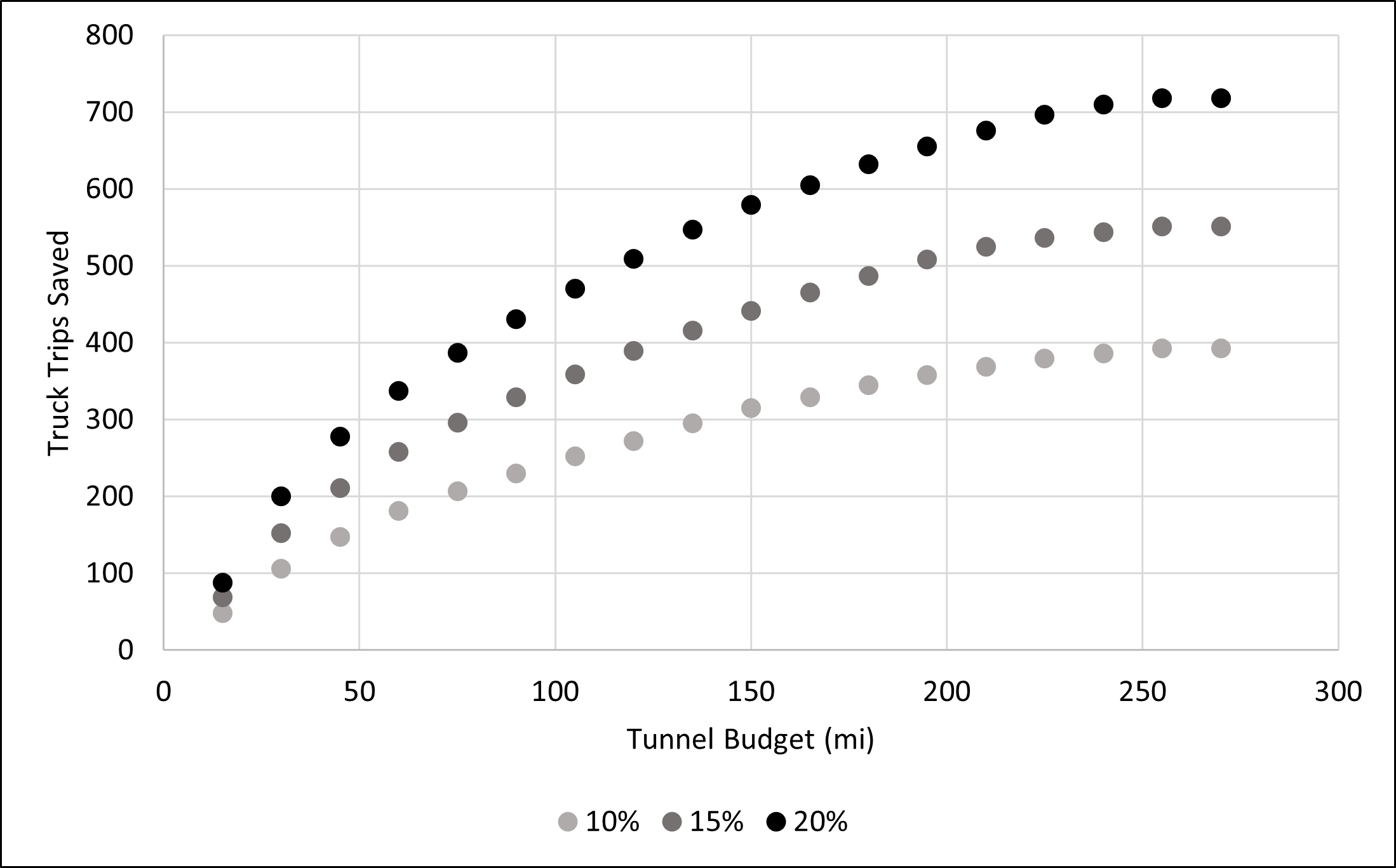}
    \caption{Chicago}
    \label{fig:ChicagoTrucks1Depot}
     \end{subfigure}
     \hfill
     \begin{subfigure}[b]{0.49\textwidth}
         \centering
    \includegraphics[width = 1\textwidth]{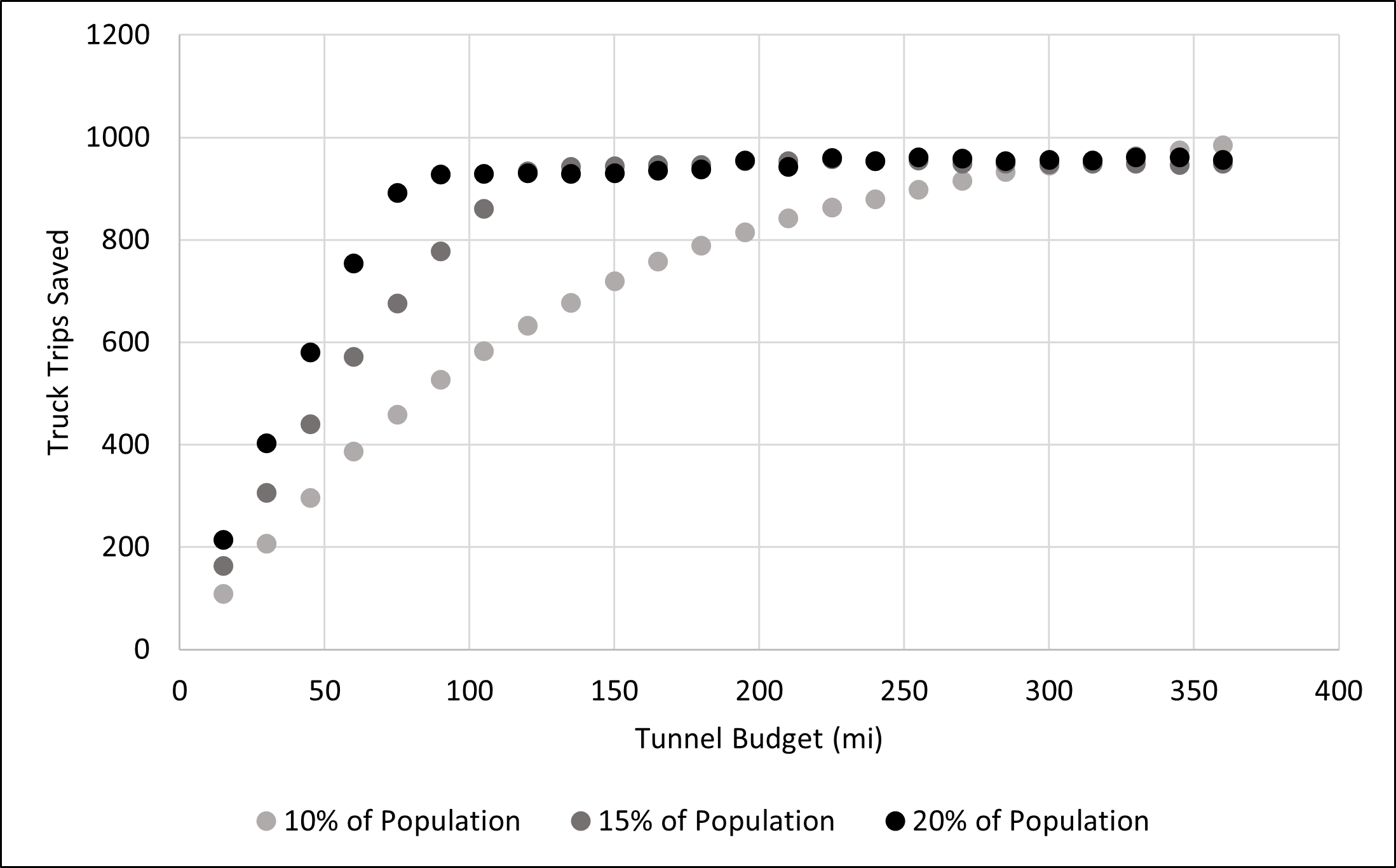}
    \caption{New York City}
    \label{fig:NewYorkTrucks1Depot}
     \end{subfigure}
        \caption{Number of Truck Trips Saved Daily by the 1-UFT}
        \label{fig:Trucks1Depot}
\end{figure}

The number of trucks saved in New York with $20\%$ demand density with one and two depots are shown in Figure \ref{fig:NewYorkTrucks20}. As with the amount of demand served, we do not show results for two depots in Chicago since the results are the same when one or two depots are used. When the tunnel budget is greater than 90 miles and $n_d = 2$, the number of truck trips saved continues to increase at a rate similar to the percent of demand served from two depots in New York City, as shown in Figure \ref{fig:NewYorkTrucks20} and discussed above. With two depots, all UFT demand can be served, and at most 1,853 truck trips are saved when there is a demand level of $20\%$. 

\noindent \textit{Insight 6: When the depot loading capacity is binding, using a second depot allows a greater reduction in the number of truck trips needed to serve the city.}

\begin{figure}
    \centering
    \includegraphics[width = 0.6\textwidth]{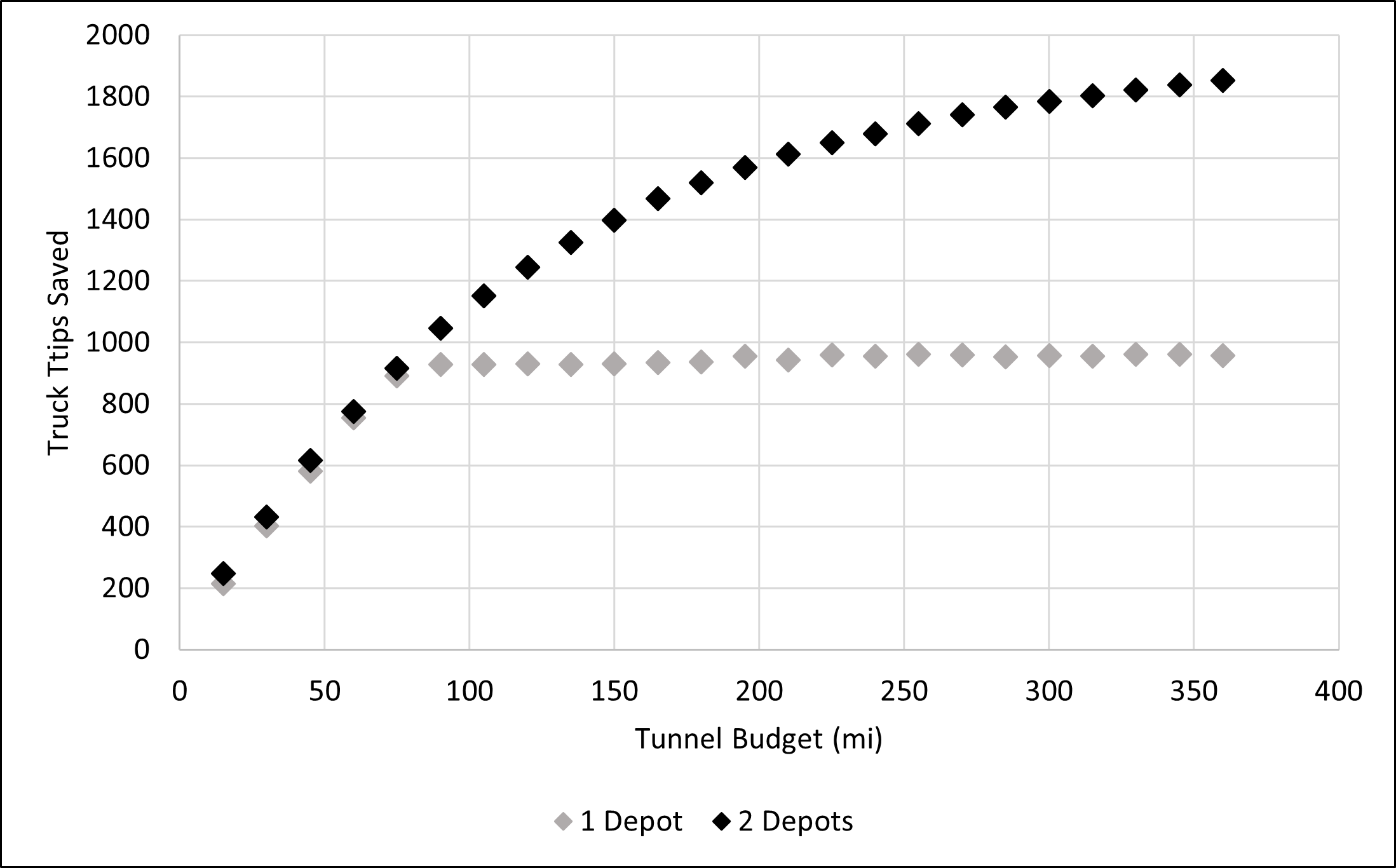}
    \caption{Number of Truck Trips Saved in New York City at a Demand Level of $20\%$}
    \label{fig:NewYorkTrucks20}
\end{figure}

\subsubsection{Truck miles}
We can estimate the number of truck miles saved using the method described in Section \ref{measures}. The results for Chicago for $n_d = 1$ and 20\% demand are in Figure \ref{fig:ChicagoTruckMiles20}. In Chicago, as for the number of truck trips saved, the number of truck miles saved at any given tunnel budget is generally increasing with the level of UFT demand, but the patterns are not as smooth as with the previous measures. There are tunnel budgets at which there is a sharp increase or decrease in the number of truck miles saved, as shown in Figure \ref{fig:ChicagoTruckMiles20}. One section with these jumps occurs at tunnel budgets between 120 and 150 miles. For a $20\%$ demand level, when the tunnel budget is 120 miles, less than 9,000 truck miles are saved, but at a tunnel budget of 135 miles, almost 14,000 truck miles are saved. The solutions for these budgets are shown in Figures \ref{fig:Chicago120} and \ref{fig:Chicago135}. For a tunnel budget of 135 miles, the depot is located farther north than for the 120 mile budget. This change in the network design makes minimal difference the objective of the UFT, but it makes a big change in the cost to serve the microhubs via trucks.  When modeling the distance via trucks, we have to consider multiple trips from the depot to high demand microhubs, which are now further from the depot in Figure \ref{fig:Chicago135}, and new long trips from the depot to the microhubs located in the southern part of the city in \ref{fig:Chicago135}.  
We can see this phenomenon again when we compare the case with a tunnel budget of 135 miles to at a tunnel budget of 165 miles in Figure \ref{fig:Chicago165}. Here the depot is located close to the central west side of Chicago, resulting in shorter individual trip distances between the depot and the microhubs. Even though the UFT does not consider truckload sized trips, certain hub locations using the same tunnel budget will require more total distance to be traveled by carriages and thus more energy usage, as discussed in detail in Section \ref{CostAnalysis}.  Thus, this may suggest potential further study of alternative or additional objective functions in future work.

 \begin{figure}
     \centering
     \begin{subfigure}[b]{0.49\textwidth}
         \centering
    \includegraphics[width = 1\textwidth]{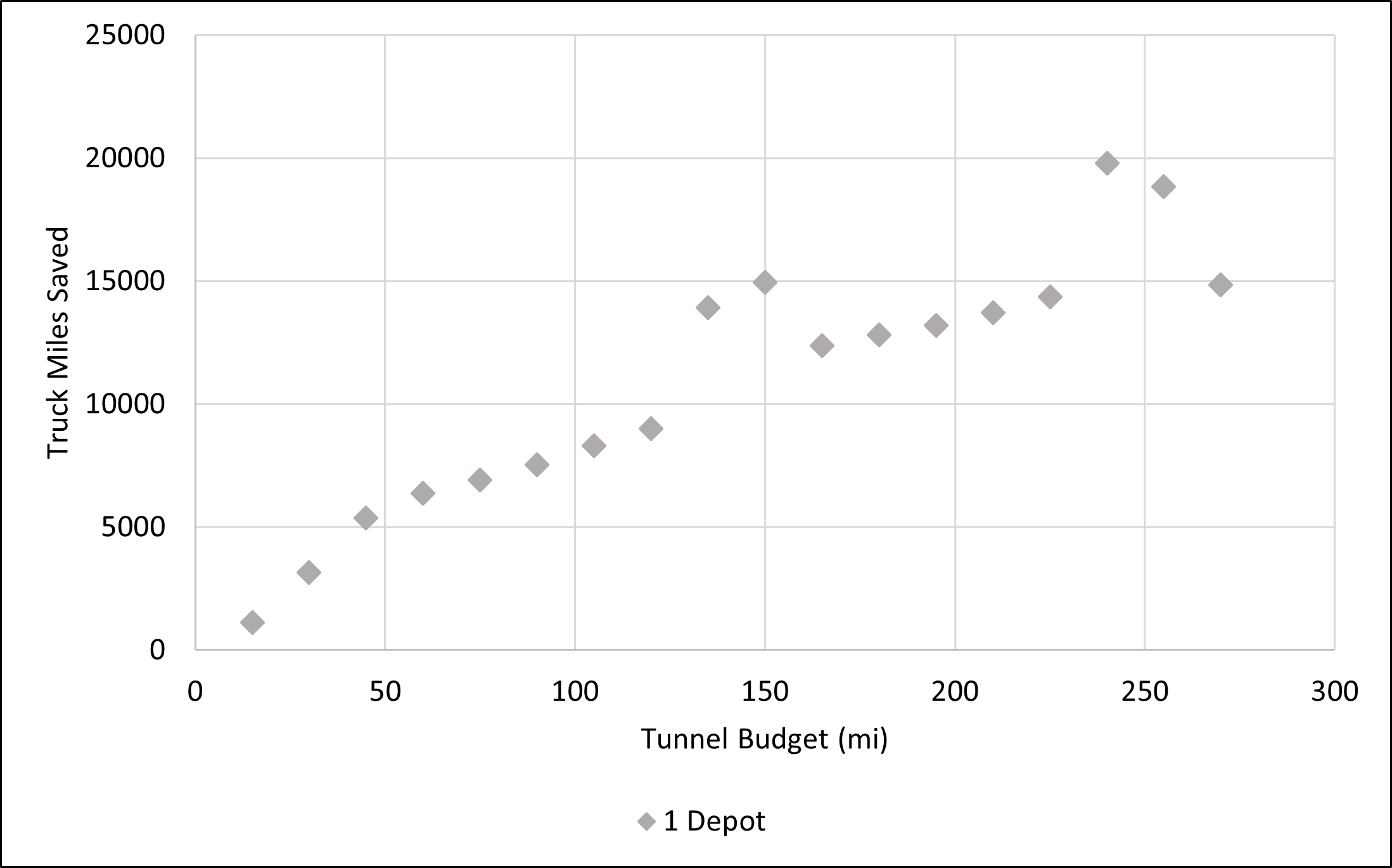}
    \caption{Chicago, 1 Depot}
    \label{fig:ChicagoTruckMiles20}
     \end{subfigure}
     \hfill
     \begin{subfigure}[b]{0.49\textwidth}
         \centering
    \includegraphics[width = 1\textwidth]{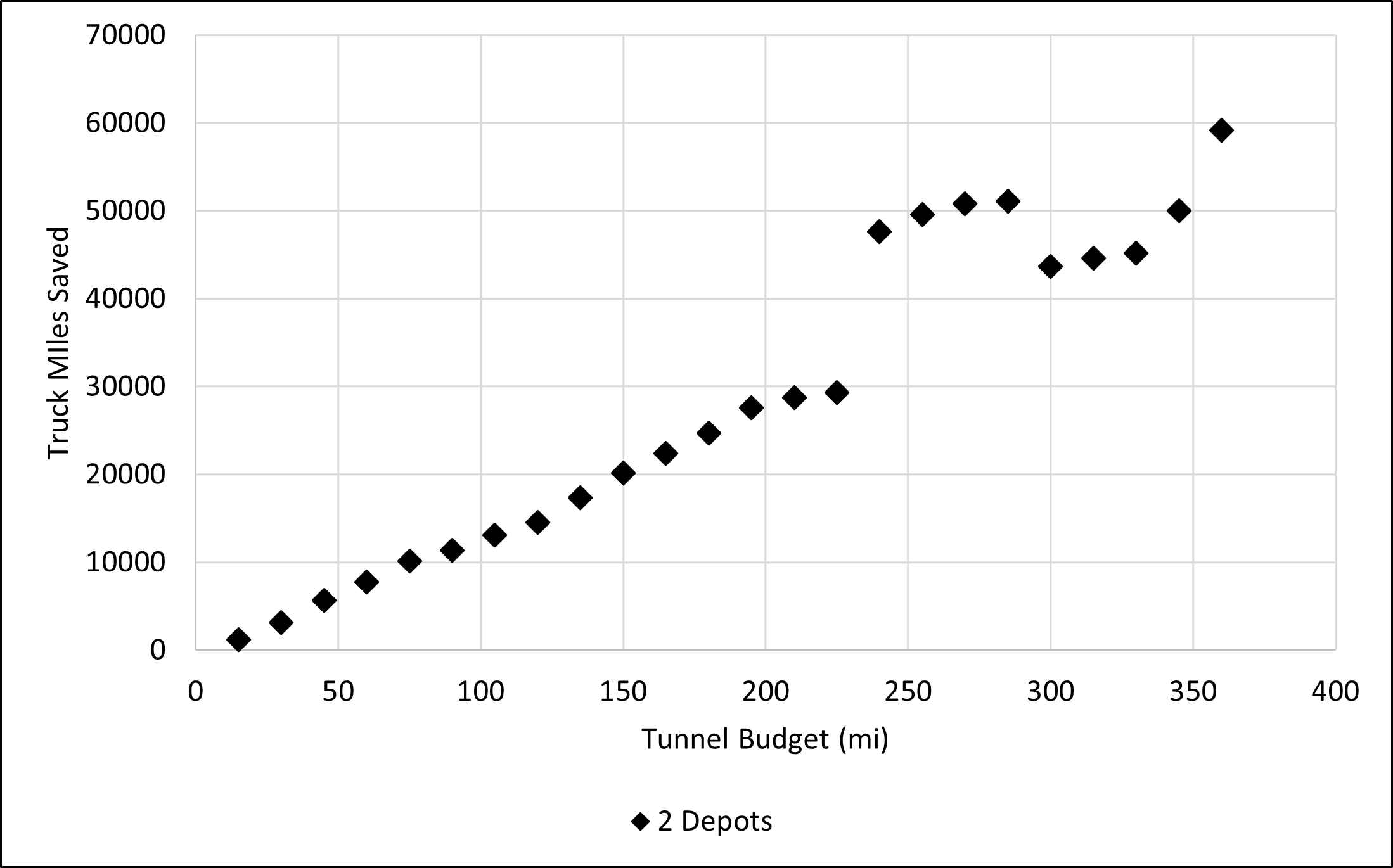}
    \caption{New York City, 2 Depots}
    \label{fig:NewYorkTruckMiles20}
     \end{subfigure}
        \caption{Number of Truck Miles Saved at a Demand Level of $20\%$}
        \label{fig:TruckMiles20}
\end{figure}

\noindent \textit{Insight 7: The selected depot location can have a significant impact on the number of truck miles saved by the use of a UFT system even when it has little impact on the UFT objective. }

\begin{figure}
     \centering
     \begin{subfigure}[b]{0.317\textwidth}
         \centering
    \includegraphics[width = 1\textwidth]{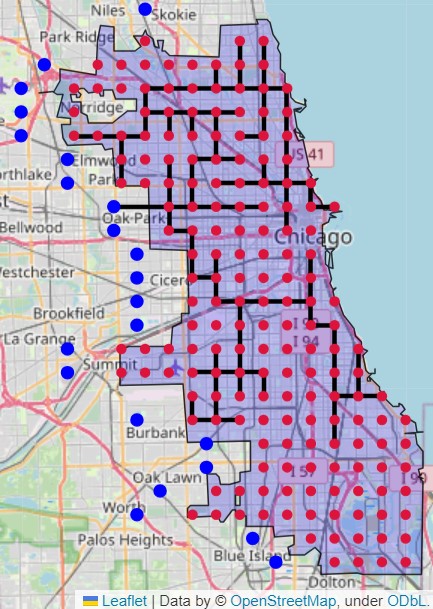}
    \caption{Tunnel Budget of 120 Miles}
    \label{fig:Chicago120}
     \end{subfigure}
     \hfill
     \begin{subfigure}[b]{0.315\textwidth}
         \centering
    \includegraphics[width = 1\textwidth]{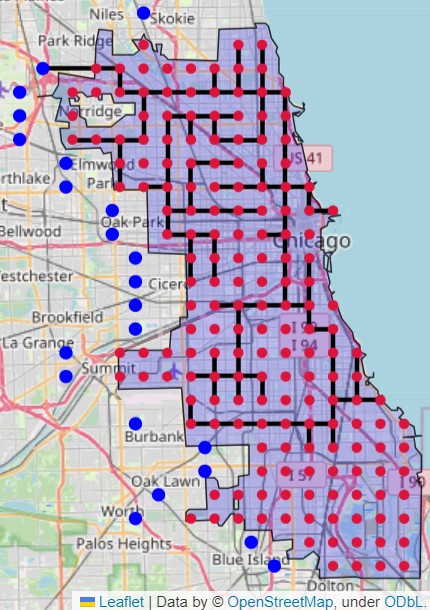}
    \caption{Tunnel Budget of 135 Miles}
    \label{fig:Chicago135}
     \end{subfigure}
     \hfill
     \begin{subfigure}[b]{0.308\textwidth}
         \centering
    \includegraphics[width = 1\textwidth]{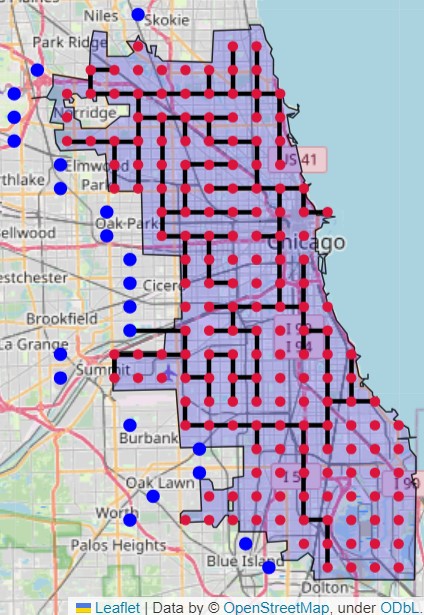}
    \caption{Tunnel Budget of 165 Miles}
    \label{fig:Chicago165}
     \end{subfigure}
    \caption{Chicago Solutions with 20\% Demand Levels and $n_d = 1$}
    \label{fig:ChicagoExamples1}
\end{figure}

For New York City, as shown in Figure \ref{fig:NewYorkTruckMiles20}, for $n_d = 2$ there is a steady increase in the number of miles saved as the tunnel budget increases while it is less than 225 miles. There is a large jump in the number of truck miles saved from a tunnel budget of 225 miles to 240 miles from 29,331 to 47,656 truck miles saved due to the location of the depots relative to the microhubs assigned to them. And the number of truck miles saved decreases as the tunnel budget increases from 285 miles to 300 miles due to smaller distances between microhubs and the depot they are assigned to, similar to the case for Chicago discussed above.

\section{Cost analysis} \label{CostAnalysis}

The above results indicate that a UFT system could have a big impact on taking trucks off the roads as well as reducing emissions.  Thus, the obvious next question is how much would a UFT actually cost to build and operate? And how much could cities recoup through charging for the use of such a system?  The latter question is a particularly challenging one given all of the uncertainties. There is uncertainty about how much savings that different delivery service providers would estimate from using a UFT, their willingness for change, what interventions and incentives a city would use to encourage the use of UFTs, etc.  Thus, we defer the discussion of pricing, and sensitivity to pricing levels, to future work.  We will try to estimate the cost of building and operaing a UFT, though, here. We describe our cost estimates of a UFT network by cost type and provide an illustrative example in Table \ref{Tab:Cost} for building a UFT network in Chicago using a tunnel budget of 150 miles and one depot. We compute all component costs on a per-package basis in Section \ref{components} and discuss totals and comparisons across different mileage budgets in Section \ref{compare}.

\subsection{Cost components} \label{components}

\subsubsection{Capital expense and other fixed costs}
The capital expense of a UFT network includes the cost of carriages, rail, construction, design, and the motors and systems. For these costs, we use the cost per carriage and distance based cost of rail, construction, design, motors, and systems costs from \cite{magway}. These values are given in Table \ref{Tab:CostCapex}. We determine the number of carriages needed by dividing the number of packages delivered by a UFT network of 150 miles in Chicago by the number of loading windows. The number of packages served by the UFT network in this instance is 373,428. In our computational tests, we assume a maximum loading time of 12 hours, and if we assume four, three hour loading periods, then $\lceil 373,428/4 \rceil$ = 93,357 carriages are needed. These carriages can be re-used in each loading window.
We assume a depreciation period of twelve years based on the expected lifetime of the UFT carriages. We use an interest rate of $8\%$ and an insurance premium of $0.5\%$ to obtain an annual fixed cost of $\$ 804,408 $ per mile of tunnel built. With a total tunnel distance of 300 miles (considering the track going both directions along a given arc) and 373,428 packages delivered by UFT, there is a capital expense and fixed cost of $ \$1.770 $ per package.

\begin{table}[h!] 
\begin{center}
\begin{tabular}{ |l|l|r| } 
 \hline
 {Cost} & {Units} & {Value} \\
 \hline
  Cost/Carriage & $\$$/carriage & 1,172 \\ 
  Rail and Construction & $\$ $/mile & 1,714,438 \\
  Design & $\$ $/mile & 455,685\\
  Motors and System & $\$ $/mile  & 2,243,686\\
 \hline
\end{tabular}
\caption{Capex Cost Data from Magway} \label{Tab:CostCapex}
\end{center}
\end{table}

\subsubsection{Variable costs}
The variable costs associated with a UFT network depend on the energy usage for the carriages and track and the maintenance required for the carriages. In a Magway-like UFT system that is propelled by linear motor technology, the carriages are propelled by components on the track and on the carriages. To run the UFT system, we need to calculate the power that is generated both by the track and by carriages to move them through the system. The power used by the track is approximately 27.35 kW/mile \citep{magway}. To determine the power used by the track over the whole network, we multiply by the total length of tunnel used, which is 300 miles in our example (since there is a tunnel in each direction along each arc included in the network), giving a value for power of 8,204 kW. In our example, the power required for carriages in each time period is $0.8 \text{ kW/carriage} \times 93,357 $ carriages = 74,686 kW. To determine the total energy used by the UFT system in a day, we sum the power of the track and the carriages and multiply by the time over which the system is used. To determine the total time the system is used for each loading window, we add the loading time to twice the time needed for a carriage to reach the microhub located farthest from the depot. We multiply by two to account for the return trip to the depot. In our test case, the travel time to the microhub located farthest from the depot is 1.27 hours. The loading time required in each window is 1.30 hours. This gives a total operational time of $1.30 + 1.27\times2 = 3.846$ hours in each loading window, and over all four loading windows, an operation time of 15.382 hours. Thus, the total energy used by the system in a day will be $(8,204 + 74,686) \times 15.382 = 1,275,030$ kWh. From \cite{energysage}, the cost of energy is $\$0.14$ per kWh, resulting in a cost of $\$178,504$. This gives an energy cost of $\$0.478$ per package.

It is expected \citep{magway} that the wheels on UFT carriages will need to be replaced every 62,137 miles. The cost of each replacement is $ \$25.40 $, which is equivalent to a cost of $ \$0.000409 $ per mile. Using our test case, this equates to a cost of $\$0.123 $ per package. The total variable cost is $ \$0.601 $ per package.

\subsubsection{Staff/labor}
People are needed to operate the depot and microhubs, complete maintenance activities, and engineer software for the UFT system. \cite{magway} estimates that the number of maintenance actions per person per hour is 6, and converting this to a yearly number per person assuming eight hour shifts and 20 work days per month gives 11,520 maintenance actions per person per year. We determine the number of maintenance actions needed per carriage per year using:
\begin{align}
    \frac{(\text{Length of UFT network})\times (365)}{(\text{62,137})\times (\text{Number of carriages})}
\end{align}
where 62,137 is the mileage at which routine carriage maintenance will need to be completed. This gives 0.000179 maintenance actions needed per carriage per year based on our example instance in Chicago. Multiplying this value by the actions per person per year, we find that approximately 42 maintenance actions will be needed per year, and that one maintenance operator is needed.

Based on the required lead time of 0.05 seconds between packages and the length of the carriages, at most eleven packages per second can enter the UFT system. The capacity of a loading/unloading unit is estimated to be 15 s-unit/carriage \citep{magway}. Thus, we need 165 loading/unloading units at the depot so that the maximum loading rate can be achieved. It is also estimated by \cite{magway} that one person can operate five loading/unloading units. We assume that there will be a person at each microhub served by the UFT to unload packages. In our Chicago test case, 114 microhubs are included in the solution, so the total number of staff needed for loading and unloading is $(165/5 + 114) \times 3 = 441$ people. We multiply by three since we expect there to be three eight-hour shifts per day. Additionally, the use of a UFT network requires software engineers. The number needed is estimated to be 30 people \citep{magway}. So, the total number of people needed is 472.

Using this estimate of the number of people needed for warehouse operations and software engineers, we can calculate the staff cost for operations. According to Zip Recruiter, the average hourly wage for warehouse operators is $\$22$ per hour \citep{OperatorWage}. We assume that each person will work an 8 hour shift 20 days a month, giving a total of 1920 hours per person. Multiplying this by the 442 maintenance and operations people needed per year, we have a total of 848,640 hours per year and a yearly cost of $ \$ 18,670,080 $ for warehouse and maintenance staff. The average hourly wage for software engineers in the United States is $\$67$ per hour from \cite{EngineerWage}. For the 30 software engineers needed, a total of 57,600 hours will be worked per year at a cost of $\$22,529,280$. These yearly staff costs for operations can be divided among the deliveries to get a cost of $\$0.165$ per package.

\subsubsection{Loading/Unloading}
While we included the staff costs for loading and unloading above, we have not included the other expenses associated with loading and unloading carriages.   Based on an estimate from \cite{magway}, the operational cost for loading and unloading is $\$127,000$ per loading and unloading unit per year. Above we estimated the need for 165 loading and unloading units, giving a total cost of $\$20,955,000$ per year. Additionally, there will be a cost associated with warehouse leases and operating expenses of approximately $\$11.96$ per square foot according to \cite{WarehouseCost}. We estimate the warehouse surface to be 322,909 square feet \citep{magway}. From this we find the loading/unloading cost per package to be $\$0.182$.

There are a variety of ways that a microhub could be set up, and packages could be distributed to customers. Some suggest that the microhub be set up within existing infrastructure such as transit stations \citep{lee2020moving} or on the side of streets \citep{micro}. As discussed in Section \ref{intro}, many formats and locations for microhubs have been considered without a consensus on the type and size of microhub that should be used.
Most assume a size of 500-1000 square feet. Since we have 114 microhubs, we will assume 114 structures of an average size 750 square feet.  Because they will need less infrastucture than a warehouse and there is NO information out there on pricing for microhubs, we will assume the same cost per square feet as warehouses of \$11.96 per year.  This translates to \$511,290 per year or \$0.0075 per package.  

\subsubsection{Last meter}
 At microhubs, customers may walk up to a counter to retrieve their package, or some may request cargo bike delivery from the microhub. As noted above, we assume there is at least one staff person at each microhub available at all times for unloading packages and/or serving customers. 

We estimate the cost of the last mile for a package based on the use of cargo bikes.  For deliveries that do not use cargo bikes, these costs could be used as proxy for other options. The cost per package of biking 1.5 miles (3/4 mile in each direction) to deliver a package from the microhub is $\$2.828$ from \cite{pedal}. The typical fare includes a fixed pickup fee which we ignored here since the cargo bike operator would likely be picking up multiple packages from a microhub for delivery.

\begin{table}[h!] 
\begin{center}
\begin{tabular}{ |c|l|l|r| } 
 \hline
 {Cost Type} & {Description} & {Units} & {Value} \\
 \hline
  & CAPEX & $\$$ (12 yr depreciation) & 1,433,314,509 \\ 
  & Route Length (both directions) & Miles & 300 \\
  & Number of Carriages & $\lceil \frac{\text{Demand Served}}{\text{\# of loading windows}}\rceil$ & 93,357 \\
  & CAPEX per mile & $\$$/mile & 4,778,661 \\
  & Interest Rate & $\%$ & $8.00\%$  \\
  & Interest & $\$$/mile/year & 382,293 \\
  Fixed Cost & Depreciation Period & years & 12  \\
  & Depreciation & $\$$/mile/year & 398,222 \\
  & Insurance Premium & $\%$ & 0.50$\%$ \\
  & Insurance & $\$$/mile/year & 23,893 \\
  & Fixed Cost per Package & $\$$/package & 1.770 \\
  \hline
  & Power & KWh & 1,275,030 \\
  & Cost of Energy & $\$$/kWh & 0.14 \\
  Variable & Cost of Energy per Package & $\$$/package & 0.478 \\
  Costs & Carriage Maintenance/62,137 miles & $\$$/carriage & 25.40 \\
  & Carriage Maintenance & $\$$/package & 0.123 \\
  \hline
  & Maintenance Actions/operator/hour & actions/person/hour & 6 \\
  & Maintenance Actions/operator/year & actions/person/year & 11520 \\
  & Maintenance Actions/year/carriage & action/year/carriage & 0.000179 \\
  & Total Maintenance Actions per year & actions/year & 67 \\
  & Maintenance Operators & people & 1 \\
  & Carriages loaded/second & carriages/s & 11 \\
  People & Loading capacity of 1 loading unit & s-unit/carriage & 15 \\
  & Loading/Unloading Units & units & 165 \\
  & Units Operated/Operator & units/person & 5 \\
  & Loading/Unloading Operators & people & 441 \\
  & Software Engineers & people & 30 \\
  & Total People Needed & people & 472 \\
  \hline
  & Cost/hour/operator & $\$$/hour & 22 \\
  & Hours worked/year/operator & hr/person & 1,920 \\
  & Total Operator Hours worked/year & hours & 848,640 \\
  Staff Cost& Cost/hour/software engineer & $\$$/hr & 67 \\
  & Total Software Engineer Hours/year & hours & 57,600 \\
  & Cost of people per package & $\$$/package & 0.165 \\
  \hline
  & Yearly cost loading/unloading units & $\$$ & 20,955,000 \\
  Loading/& Cost of Warehouse & $\$$/$\text{ft}^2$/year & 11.96 \\
  Unloading & Warehouse Surface & $\text{ft}^2$ & 322,909 \\
 & Warehouse Cost per package & $\$$/package & 0.182 \\
  &Average Microhub Surface & $\text{ft}^2$ & 750 \\
  &Microhub Cost per package&  \$ & .0075\\
  \hline
  Total UFT Cost & Cost per Package of UFT Delivery & $\$$/package &$2.726$ \\
  \hline
  Last Meter & Cost of Cargo bike & $\$$/package & 2.828 \\ 
 \hline
 Total Delivery Cost & Cost per package & $\$$/package & 5.553 \\
 \hline
\end{tabular}
\caption{UFT Costs for a 1 Depot and Tunnel Budget of 150 miles in Chicago} \label{Tab:Cost}
\end{center}
\end{table}

\subsection{Per package final cost and comparison}\label{compare}
The sample instance with a mileage budget of 150 miles in Table \ref{Tab:Cost} yields a cost per package of \$5.55. This is a very interesting number when we compare it with the traditional cost of last mile delivery. Considering similar costs such as labor, fuel, and maintenance, the cost of last mile delivery for a small package in an urban environment is often estimated as \$10.00 \citep{dispatch}. Labor costs account for the largest expense, and delivery drivers continue to be in short supply \citep{extra}. Thus, using a UFT represents over a 40\% savings in costs, which is an important factor in getting companies to take advantage of all of the other advantages that a UFT brings.  The savings come primarily from reduced labor costs since drivers are not needed to operate a UFT.

To better understand the distance budget's role in the total cost, we examine how the cost per package changes with different mileage budgets and one depot in Figure \ref{fig:costs}. For both Chicago and New York, at all tunnel budgets, the cost per package of delivery by UFT is less than the per package cost of truck delivery in urban environments. At budgets less than 30 miles of tunnels, the cost of delivery per package is larger in Chicago than in New York City due to the smaller volume of packages delivered for the same fixed costs.  This also reflects the distance between the potential depot locations and high demand microhubs in New York City (closer) vs. Chicago (farther).   

We see a minimum cost per package of delivery for New York City at a tunnel budget of 15 miles and for Chicago at 45 miles. With this tunnel length, a relatively high volume of demand per length of tunnel built can be served by the UFT system. This distributes the large fixed cost of tunnel construction over a large number of deliveries while requiring a smaller number of carriages. As the tunnel budget increases, the number of packages per mile of tunnel built decreases, resulting in an increasing cost of UFT delivery per package. This is why we see the continual increase in the cost per package in Chicago, with a cost per package of $\$6.78$ at a tunnel budget of 270 miles. For New York City, we see that the cost per package levels out and has a max value of $\$8.78$ at a tunnel budget of 225 miles. This is due to the loading capacity at the depot becoming binding at tunnel budgets over 120 miles. For these tunnel budgets, small changes in solutions result in small variations in the cost per package. This graph highlights the importance of a city evaluating the solutions of the $n_d$-UFT at different mileage budgets to understand the impact on costs.

\begin{figure}
    \centering
    \includegraphics[width = 0.6\textwidth]{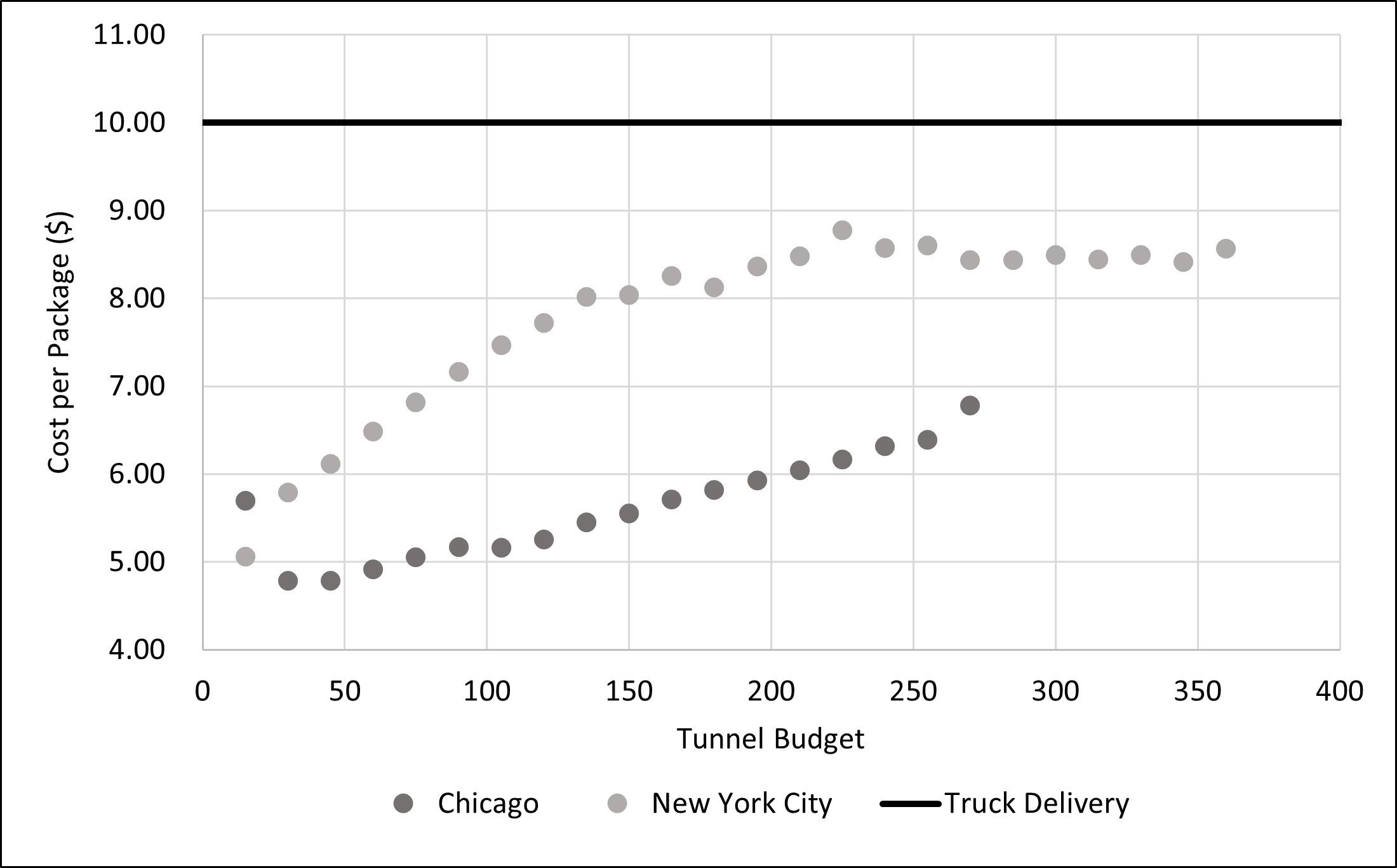}
    \caption{Cost per Package for 1 Depot with a Demand Level of $15\%$}
    \label{fig:costs}
\end{figure}

\section{Conclusions and future work}\label{future}
The $n_d$-UFT provides a promising alternative for package delivery in urban areas that could greatly reduce the number of packages handled  by delivery trucks as well as reduce other important metrics such as emissions. We have also shown that these benefits come from a network where the cost per package of delivery is less than existing delivery systems. Most importantly, we have shown the importance of carefully studying the $n_d$-UFT at different mileage budget levels, different loading capacities, and numbers of depots to understand the important tradeoffs of different network design parameters.  

Many opportunities exist for future work related to the $n_d$-UFT.   There are many unknowns associated with the pricing and vendor acceptance of UFT systems that should be investigated. This includes how vendors should be charged to use a UFT system and the willingness of vendors to adopt such a system.  There is also much to be determined about the best design of microhubs. 

There are many extensions to this problem that could impact the design of the network. Future models could extend the $n_d$-UFT to include capacity at microhubs and to consider the feasibility and costs associated with building a UFT network along specific segments of the road network. Other objectives may be evaluated such as a maximization of truck miles saved or minimization of energy usage. Stochastic settings should also be evaluated, where the demand at microhubs is modeled by a distribution. It is also likely that, in practice, cities would want to build the UFT network in phases or expand the network over time. This extension to the $n_d$-UFT problem could provide cities with UFT network design plans that provide the best use of the system over a multiple year time span.

If UFT networks are built, there are many operational questions that merit careful study.  For example, delivery service providers who use the UFT have to decide when to purchase capacity and how much capacity at each depot.  They also have to coordinate the logistics of separating packages that travel on the UFT vs. trucks and how to handle packages that exceed the size limitations of the carriages. It would also be interesting to utilize the return trips of carriages to the depot to make customer returns or complete the first-mile delivery of packages bound for other cities. In conclusion, UFTs represent a promising new delivery model with many interesting directions for future research.

\bibliographystyle{unsrtnat}
\bibliography{template}  

\end{document}